\def\NZQ{\mathbb}        
\def\RR{{\NZQ R}}
\def\P{\mathcal{P}}
\def\H{\mathcal{H}}
\def\frk{\mathfrak}        
\def\Phi{{\frk N}}
\newtheorem{Theorem}{Theorem}[section]
\newtheorem{Lemma}[Theorem]{Lemma}
\newtheorem{Proposition}[Theorem]{Proposition}
\theoremstyle{definition}
\newtheorem{Example}[Theorem]{Example}
\begin{document}
\title{Decomposable edge polytopes of finite graphs}
\author{Atsushi Funato, Nan Li, Akihiro Shikama}
\address{Atsushi Funato, Department of Pure and Applied Mathematics, Graduate School of Information Science and Technology,
Osaka University, Toyonaka, Osaka 560-0043, Japan}
\email{a-funato@cr.math.sci.osaka-u.ac.jp}

\address{Nan Li, Department of Mathematics,
Massachusetts Institute of Technology,
Cambridge, MA 02139, USA}
\email{nan@math.mit.edu}

\address{Akihiro Shikama, Department of Pure and Applied Mathematics, Graduate School of Information Science and Technology,
Osaka University, Toyonaka, Osaka 560-0043, Japan}
\email{a-shikama@cr.math.sci.osaka-u.ac.jp}

\thanks{
{\bf 2010 Mathematics Subject Classification:}
52B05, 32S22. \\
\hspace{5.5mm}{\bf Keywords:}
finite connceted graphs, edge polytopes, separating hyperplane.\\
}

\begin{abstract}
Edge polytopes is a class of interesting polytope with rich algebraic and combinatorial properties, 
which was introduced by Ohsugi and Hibi. In this papar, we follow a previous study on cutting edge polytopes
by Hibi, Li and Zhang. Instead of focusing on the algeraic properties of the subpolytopes as the previous
study, in this paper, we take a closer look on the graphs whose edge polytopes are decomposable. In particular,
we answer two important questions raised in the previous study about 1) the relationship between type I and type II
decomposable graphs and 2) description of decomposable graphs in terms of the underlying graphs.
\end{abstract}
\maketitle
\section*{Introduction}

Let $\P$ be an integral polytope  in $\RR^d$.
Here we say a polytope is integral if all vertices of the polytope are integer points.
We say that $\P$ is {\em decomposable} if there exist a hyperplane $\H$ with $\H \cap (\P \setminus \partial \P )\neq\emptyset$ such that each of the convex polytopes $\P\cap \H^{(+)}$ and $\P\cap \H^{(-)}$ is integral.
In this paper, we discuss decomposability of a special class of integral polytopes, called {\em edge polytopes}. Edge polytopes are introduced by Ohsugi and Hibi in \cite{OH2}. Edge polytopes are integral polytopes arising from finite connected graphs.
Let $G$ be a finite simple graph with vertex set $V=[d]=\{1, \ldots , d\}$ and edge set $E(G)=\{e_{1}, \ldots , e_{n}\}$. Let $\mbox{\boldmath $e$}_{i}$ be the $i$-th unit coordinate vecter of the Euclidean space ${\RR}^{d}$. If $e=(i,  j)$ is an edge of $G$, then  we set $\rho(e)=\mbox{\boldmath $e$}_{i}+\mbox{\boldmath $e$}_{j}\in {\RR}^{d}$. The edge polytope $\P_G$ of $G$ is the convex hull of $\{\rho(e_{1}),\ldots \rho(e_{n})\}$ in ${\RR}^{d}$. 

The only integer point belongs to an edge polytope are its vertices. It follows from this fact that 
an edge polytope is decomposable if and only if there exist a hyperplane $\H$ which is not 
supporting hyperplane of $\P_G$ and such that for each edge $E$ of $\P_G$ with $\H \cap E \neq \emptyset$ it follows that $E \subset \H$ or $\H \cap E$ is an end point of $E$.

In \cite{HLZ}, type I and II decomposability for edge polytopes are introduced and an algorithm to decide decomposability is given. 
In this paper, we discuss decomposability of edge polytopes of finite connected simple graphs. We carry out the study of edge polytope decomposability and take a closer look at the graphs with type I and type II decomposable edge polytopes. 
In Section 2, we study decomposability of edge polytopes in terms of the underlying graphs.



\section{Edge polytopes and its decomposablity}
Recall that a convex polytope is integral if all of its vertices have integral coordinates: in particular, $\P_G$ is an integral polytope. Let $\partial \P$ denote the boundary of a polytope $\P$. We say that $\P$ is decomposable if there exists a hyperplane $\H$ of ${\RR}^{d}$ with $\H\cap (\P\backslash\partial \P)\neq\emptyset$ such that each of the convex polytopes $\P\cap \H^{(+)}$ and $\P\cap \H^{(-)}$ is integral.  Here $\H^{(+)}$ and $\H^{(-)}$ are the closed half-space of ${\RR}^{d}$ with $\H^{(+)}\cap \H^{(-)}=\H$. Such a hyperplane $\H$ is called a {\em separating hyperplane} of $\P$.  We say a graph $G$ is {\it decomposable} if edge polytope $\P_G$ of $G$ is decomposable.
A simple graph is a graph with no loops and no multiple edges. Let $G$ be a finite connected simple graph with vertex set $V=[d]=\{1, \ldots , d\}$ and edge set $E(G)=\{e_{1}, \ldots , e_{n}\}$. Let $\mbox{\boldmath $e$}_{i}$ be the $i$-th unit coordinate vecter of the Euclidean space ${\RR}^{d}$ . If $e=(i, j)$ is an edge of $G$, then  we set $\rho(e)=\mbox{\boldmath $e$}_{i}+\mbox{\boldmath $e$}_{j}\in {\RR}^{d}$. The edge polytope $\P_G$ of $G$ is the convex hull of $\{\rho(e_{1}),\ldots \rho(e_{n})\}$ in ${\RR}^{d}$. \\

Since edge polytope $\P_G$ is a (0,1) polytope, the only interger points in $\P_G$ are its vertices.
The vertices of the edge polytope $\P_G$ of $G$ are  $\{\rho(e_{1}),\ldots , \rho(e_{n})\}$, but not all edges of the form $(\rho(e_{i}), \rho(e_{j}))$ actually occur. 
In the recent research, the number of edges of edge polytopes has been discussed (\cite{HMOS}).
For $i\neq j$, let co$(e_{i}, e_{j})$ be the convex hull of the pair of $\{\rho(e_{i}), \rho(e_{j})\}$. The edges of $\P_G$ will be a subset of these co$(e_{i}, e_{j})$s. For edges $e=(i, j)$ and $f=(k, \ell)$, call the pair of edges $(e, f)$ {\em cycle-compatible} with $C$ if there exists a 4-cycle $C$ in the subgraph of $G$ induced by $\{i, j, k, \ell\}$ (in particular, this implies that $e$ and $f$ do not share any vertices). The following result allows us to identify the co$(e_{i}, e_{j})$ that are actually edges of $\P_G$ using the notion of cycle-compatibility.\\

\begin{Lemma}[\cite{OH}]\label{edgeofedgepolytope}
Let $e$ and $f$ be edges of $G$ with $e\neq f$. Then {\rm co}$(e, f)$ is an edge of $\P_G$ if and only if $e$ and $f$ are not cycle-compatible.
\end{Lemma}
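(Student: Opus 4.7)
The plan is to translate ``{\rm co}$(e,f)$ is an edge of $\P_G$'' into the existence of a linear functional $\phi$ on $\RR^d$ whose maximum over $\{\rho(g) : g \in E(G)\}$ is attained precisely on $\{\rho(e), \rho(f)\}$. Writing $\phi(\mathbf{x}) = \sum_{v} w_v x_v$ so that $\phi(\rho((p,q))) = w_p + w_q$, this reduces to the combinatorial task of choosing vertex weights. For the ``only if'' direction, if $(e,f)$ is cycle-compatible there is a $4$-cycle in the subgraph of $G$ induced by the four distinct vertices $V(e) \cup V(f) = \{i,j,k,\ell\}$; a direct inspection of the three labelled $4$-cycles on four vertices yields in each case a pair of edges $e_{1}, e_{2} \in E(G) \setminus \{e,f\}$ whose endpoint multisets partition $\{i,j,k,\ell\}$, so that $\rho(e_{1}) + \rho(e_{2}) = \rho(e) + \rho(f)$ and the midpoint of {\rm co}$(e,f)$ is also a proper convex combination of two distinct vertices of $\P_G$ outside $\{\rho(e), \rho(f)\}$, precluding {\rm co}$(e,f)$ from being a face.

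For the ``if'' direction, assume $(e,f)$ is not cycle-compatible and set $w_v = 0$ for every $v \notin V(e) \cup V(f)$, leaving only the weights on the at most four relevant vertices to be designed so that $w_p + w_q = c$ for $(p,q) \in \{e,f\}$ and $w_p + w_q < c$ for every other edge of $G$. If $e$ and $f$ share a vertex, say $e = (i,j)$ and $f = (j,k)$, the assignment $w_j = 2$, $w_i = w_k = 1$ works immediately. If $V(e)$ and $V(f)$ are disjoint, write $e = (i,j), f = (k,\ell)$ and case-split on which of the cross edges $(i,k), (i,\ell), (j,k), (j,\ell)$ belong to $E(G)$: the non-cycle-compatibility hypothesis forces at most one of $\{(i,k),(j,\ell)\}$ and at most one of $\{(i,\ell),(j,k)\}$ to be an edge, so at most two cross edges are present, and any two of them must share an endpoint. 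Each resulting configuration admits an explicit four-vertex weight vector satisfying the required equality and strict inequalities (for instance $(w_i,w_j,w_k,w_\ell) = (2,1,1,2)$ when only $(j,k)$ is a cross edge, or $(3,1,2,2)$ when $(j,k)$ and $(j,\ell)$ are both present), and the remaining configurations follow by the evident symmetry.

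The main obstacle is feasibility of the weight system in the ``if'' direction. The crucial observation is that if both $(i,k)$ and $(j,\ell)$ lay in $E(G)$, the strict inequalities $w_i + w_k < c$ and $w_j + w_\ell < c$ would sum to $w_i + w_j + w_k + w_\ell < 2c$, contradicting $(w_i + w_j) + (w_k + w_\ell) = 2c$; the analogous contradiction rules out $(i,\ell)$ and $(j,k)$ simultaneously. Hence non-cycle-compatibility is exactly the hypothesis that renders the weight system consistent, and once feasible weights on $\{i,j,k,\ell\}$ are fixed, every other edge of $G$ has at least one endpoint outside $\{i,j,k,\ell\}$ and so contributes weight at most $\max_{v \in \{i,j,k,\ell\}} w_v$, which by construction is strictly less than $c$, completing the separation.
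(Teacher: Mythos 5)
Your proof is correct. Note that the paper itself offers no proof of this lemma---it is quoted from the reference [OH]---so there is nothing internal to compare against; your argument is essentially the standard one from the literature: realize co$(e,f)$ as the face maximizing an explicit linear functional $\sum_v w_v x_v$ in the ``if'' direction, and exhibit the relation $\rho(e)+\rho(f)=\rho(e_1)+\rho(e_2)$ coming from the complementary perfect matching inside the $4$-cycle to kill faceness in the ``only if'' direction. Your case analysis of the cross edges is complete (non-cycle-compatibility is exactly the statement that neither of the two complementary matchings $\{(i,k),(j,\ell)\}$, $\{(i,\ell),(j,k)\}$ lies entirely in $E(G)$, so any two present cross edges share a vertex), and the ``crucial observation'' correctly identifies why the weight system is feasible precisely in that case. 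One phrase is slightly loose: ``every other edge of $G$ has at least one endpoint outside $\{i,j,k,\ell\}$'' should read ``every edge of $G$ other than $e$, $f$, and the present cross edges,'' since the absent cross edges are not edges of $G$ and the present ones are handled by your explicit inequalities; this is a wording slip, not a gap.
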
 

Since the only integer points of edge polytopes are its vertices, the condition that $\P_G\cap \H^{(+)}$ and $\P_G\cap \H^{(-)}$ are 
integral is equivalent to the following: 
There exist a hyperplane $\H$ which is not a supporting hyperplane of $\P_G$ and such that for each edge $E$ of $\P_G$ with 
$\H \cap E \neq \emptyset$ it follows that $E \subset \H$ or $\H \cap E$ is an end point of $E$. 
This is by Lemma~\ref{edgeofedgepolytope}, equivalent to the following: for any pair of edges $e, f\in E(G)$ 
such that $\rho(e)\in \H^{(+)}\cap\partial \P$ and $\rho(f)\in \H^{(-)}\cap\partial \P$, $e$ and $f$ are cycle-compatible. \\

\begin{Proposition}[\cite{HLZ}]\label{hyperplaneform}
Let $G$ be a finite connected simple graph on $[d]$ and suppose that $\P_G \subset \RR^{d}$ is decomposable by $\H$. Then we can 
restrict attention to $\H$ of the following form:
\begin{equation}
\H =  \left\{ (x_1,\ldots ,x_d) \in \RR^d : \displaystyle\sum_{i=1}^d a_{i}x_{i}=0\   \right\}\nonumber
\end{equation}
where $a_{i}\in\{0, 1, -1\}$
\end{Proposition}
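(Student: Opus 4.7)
The plan has four parts: normalize the constant term to zero, extract combinatorial data from the coefficients, propose a $\{-1,0,1\}$-valued normal vector, and verify the resulting hyperplane still separates.

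For normalization, observe that every vertex $\rho(e)=\eb_i+\eb_j$ of $\P_G$ has coordinate sum $2$, so $\P_G$ lies in the affine hyperplane $\{x\in\RR^d:\sum_k x_k=2\}$. Given any separating hyperplane $\sum b_i x_i=c$, replacing each $b_i$ by $b_i-c/2$ gives $\sum(b_i-c/2)x_i=\sum b_i x_i-c$ on this affine hull, so the new hyperplane $\sum(b_i-c/2)x_i=0$ cuts $\P_G$ along the same set. Thus it suffices to work with hyperplanes of the form $\sum b_i x_i=0$ from the outset, and to produce $a\in\{-1,0,1\}^d$ for which $\sum a_i x_i=0$ is still separating.

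Next, I encode $b$ combinatorially. By the cycle-compatibility reformulation stated just before the proposition, $\sum b_i x_i=0$ separates $\P_G$ iff the edge sets $E_+:=\{(i,j)\in E(G):b_i+b_j>0\}$ and $E_-:=\{(i,j)\in E(G):b_i+b_j<0\}$ are both nonempty and every pair $(e,f)\in E_+\times E_-$ is cycle-compatible. In particular $e$ and $f$ share no vertex, so $V^+:=\bigcup_{(i,j)\in E_+}\{i,j\}$ and $V^-:=\bigcup_{(i,j)\in E_-}\{i,j\}$ are disjoint subsets of $[d]$; set $V^0:=[d]\setminus(V^+\cup V^-)$. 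The proposed $a\in\{-1,0,1\}^d$ is then $a_i=1$ for $i\in V^+$, $a_i=-1$ for $i\in V^-$, and $a_i=0$ on $V^0$, yielding new hyperplane $\H'=\{x:\sum a_i x_i=0\}$. For each $e=(i,j)\in E_+$ both endpoints lie in $V^+$, so $a_i+a_j=2>0$ and $\rho(e)\in(\H')^{(+)}\setminus\H'$; symmetrically $\rho(e)\in(\H')^{(-)}\setminus\H'$ for each $e\in E_-$. Hence $\H'$ is not a supporting hyperplane of $\P_G$, and both open half-spaces contain vertices of the polytope.

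The technical heart---and the main obstacle---is verifying cycle-compatibility for the enlarged partition $E_+',E_-'$ induced by $a$. Some edges of the old $E_0$ may migrate into $E_+'$ or $E_-'$ (for instance, edges from $V^+$ to $V^0$, or between two vertices of $V^+$, and symmetric cases), so cycle-compatibility cannot be inherited term-by-term from the original partition. For each new cross-pair $(e',f')\in E_+'\times E_-'$, a shared vertex would necessarily lie in $V^0$ by disjointness of $V^\pm$, and one must rule this out; then a $4$-cycle on the four endpoints must be exhibited by invoking cycle-compatibility of a suitable pair of old $E_+$ and $E_-$ edges that witness the endpoints' membership in $V^\pm$. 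The case analysis here is delicate, and if the naive $a$ fails on some configuration, the plan is to refine it by resetting selected coordinates to zero---thereby removing troublesome vertices from $V^\pm$---and to iterate until a valid $\{-1,0,1\}$-separation is reached.
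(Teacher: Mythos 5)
The paper gives no proof of this proposition at all---it is imported verbatim from \cite{HLZ}---so there is nothing to compare against except the correctness of your own argument. Your normalization step (using that $\P_G$ lies in $\sum_k x_k=2$ to kill the constant term) and the observation that $V^+$ and $V^-$ are disjoint are both fine. But the step you yourself call ``the technical heart'' is precisely where the construction fails, and the fallback of ``resetting selected coordinates to zero and iterating'' is not a proof: you give no rule for which coordinates to reset, no argument that $E_+'$ and $E_-'$ stay nonempty after resetting, and no termination or correctness claim.

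Concretely, the naive $a$ is wrong. Let $G$ on $\{1,\dots,5\}$ consist of the $4$-cycle $1\!-\!2\!-\!3\!-\!4\!-\!1$ together with the edges $(1,5)$ and $(4,5)$, and take the separating hyperplane $\sum b_ix_i=0$ with $b=(0,2,-2,0,0)$. The only positive edge is $(1,2)$, the only negative edge is $(3,4)$, they are cycle-compatible via the $4$-cycle, and every other edge is a zero edge, so this is a genuine separation. Your recipe gives $V^+=\{1,2\}$, $V^-=\{3,4\}$, $V^0=\{5\}$, hence $a=(1,1,-1,-1,0)$; but then $(1,5)$ becomes positive and $(4,5)$ becomes negative, and these two edges share the vertex $5$, so they are not cycle-compatible and $\H'$ is not separating. (A valid choice here is $a=(0,1,-1,0,0)$, which your construction can never output, since it forbids assigning $0$ to a vertex of $V^+\cup V^-$.) This is exactly the migration phenomenon you half-identify: zero edges from $V^{\pm}$ into $V^0$ (and inside $V^{\pm}$) acquire nonzero sign, and nothing in the original separation constrains them. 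A correct argument must control the weights of vertices adjacent to $V^+$ and $V^-$ along zero edges---this is essentially what forces the two signature patterns of Proposition~\ref{type}---and that analysis is entirely absent from your write-up.
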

Proposition~\ref{hyperplaneform} allows us to assume that $\H^{(+)}$ contains points $(x_{1}, \ldots , x_{n})$ where $\sum a_{i}x_{i} \geq 0$ and $\H^{(-)}$ contains points $(x_{1}, \ldots , x_{n})$ where $\sum a_{i}x_{i} \leq 0$. 
For $(i, j)\in E(G)$, let the sign of $(i, j)$ be the sign of $a_{i}+a_{j}$
, the signature of $(i, j)$ be $\{a_{i}, a_{j}\}$ and the weight of vertex $i$ be $a_{i}$. These notations enable us to call an edge $(i, j)$ ``positive", ``negative" or ``zero", corresponding to whether the associated vertex $\rho((i, j))\in \P_G$ is in $\H^{(+)}\setminus\H$, $\H^{(-)}\setminus\H$, or $\H$. 

In section$2$, we will repeatedly use the following Propositions~\ref{dim} and~\ref{type}:

\begin{Proposition}[\cite{OH}]\label{dim}
Let $G$ be a finite simple graph on $[d]$. Then, ${\rm dim} \P_G = d - r - 1$,
where $r$ is the number of bipartite connected components of $G$.
\end{Proposition}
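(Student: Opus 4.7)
The plan is to reduce the dimension computation to a linear-algebra question about the vertex-edge incidence matrix, and then to handle each connected component separately via the bipartite/non-bipartite dichotomy.

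First I would observe that every vertex $\rho(e)$ has coordinate sum equal to $2$, so the origin does not lie in the affine hull of $\P_G$. Consequently $\dim \P_G$ equals one less than the dimension of the linear span $W = \mathrm{span}_{\RR}\{\rho(e) : e \in E(G)\}$. It therefore suffices to prove that $\dim W = d - r$.

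Next, I would identify $W$ with the column span of the unsigned vertex-edge incidence matrix $M \in \{0,1\}^{d \times n}$, whose $(i,e)$-entry is $1$ exactly when $i$ is an endpoint of $e$, and compute $\mathrm{rank}(M)$ via its left null space. A tuple $(c_1,\dots,c_d)$ satisfies $c^{T}M = 0$ iff $c_i + c_j = 0$ for every edge $(i,j) \in E(G)$. On each connected component $G_k$ this relation forces $c_j = -c_i$ whenever $i$ and $j$ are adjacent, so $c|_{V(G_k)}$ is determined, up to sign, by its value at one vertex, with signs alternating along paths. If $G_k$ is bipartite the alternating assignment is consistent and contributes a one-dimensional summand to the kernel; if $G_k$ contains an odd cycle, traversing that cycle yields $c_i = -c_i$, forcing $c \equiv 0$ on $V(G_k)$. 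Summing over the $c$ connected components, the left null space has dimension exactly $r$, so $\mathrm{rank}(M) = d - r$ and hence $\dim \P_G = d - r - 1$.

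I expect the main obstacle to be presenting the bipartite/non-bipartite dichotomy for the left null space cleanly. The key idea—that $c_i + c_j = 0$ on every edge forces a consistent $\pm 1$ two-coloring precisely when the component has no odd cycle—is standard, but one must be careful to phrase it as a statement about each connected component independently so that the kernel dimensions simply add. Once that is in place, everything else is bookkeeping: extract the single affine relation $\sum_i x_i = 2$ to pass from linear to affine dimension, and sum the component contributions to obtain $d - r$.
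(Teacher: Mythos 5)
The paper does not prove this statement; it is quoted from [OH] as a known result, so there is no in-paper argument to compare against. Your proof is correct and is essentially the standard one: reduce to the rank of the unsigned vertex-edge incidence matrix via the affine relation $\sum_i x_i = 2$, and compute the left null space component by component, where the relation $c_i + c_j = 0$ on every edge admits a nonzero (one-dimensional) solution exactly on the bipartite components. The only cosmetic issues are the reuse of the letter $c$ for both the null vector and the number of components, and the (harmless) edge case of isolated vertices, which are bipartite components contributing one dimension to the kernel while their coordinate is absent from the column span — consistent with the formula.
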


\begin{Proposition}[\cite{HLZ}]\label{type}
Suppose $G$ is decomposable. Then we must have at least one positive edge and at least one negative edge, 
and we can assume one of the following two cases for the vertices of $G$:
\begin{enumerate}
\item[(I)] There are no vertices with weight $0$. All positive edges have signature $\{1, 1\}$ and all negative edges have signature $\{-1, -1\}$. 
\item[(II)] There is at least one vertex with weight $0$. All positive edges have signature $\{1, 0\}$ and all negative edges have signature $\{-1, 0\}$.
\end{enumerate} 
\end{Proposition}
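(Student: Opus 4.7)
The plan is to apply Proposition~\ref{hyperplaneform} to reduce to a separating hyperplane $\H$ with integer weights $a_i\in\{-1,0,1\}$, and then analyse such an $\H$ according to which of the three weights actually occur.

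The first assertion, that both a positive and a negative edge must exist, is immediate: since $\H$ is a separating (not merely supporting) hyperplane, each of the open half-spaces $\H^{(+)}\setminus\H$ and $\H^{(-)}\setminus\H$ meets the interior of $\P_G$, and hence contains a vertex of $\P_G$, i.e., some $\rho(e)$. The signs of these vertices under $\sum a_i x_i$ give the required positive and negative edges.

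For the dichotomy I would split on whether $0$ appears among the weights. If no $a_i=0$, then every edge $\{i,j\}$ has $a_i,a_j\in\{-1,1\}$, so $a_i+a_j\in\{-2,0,2\}$; a positive edge therefore forces $a_i=a_j=1$ (signature $\{1,1\}$) and a negative edge forces $a_i=a_j=-1$ (signature $\{-1,-1\}$). This is exactly case~(I), and it needs no further modification of $\H$.

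When at least one weight is $0$, the task is to exhibit a separating hyperplane in $\{-1,0,1\}$ form with no $\{1,1\}$ or $\{-1,-1\}$ edges, yielding case~(II). My plan is to choose $\H$ extremally: among all separating hyperplanes with coefficients in $\{-1,0,1\}$ I would take one that maximises the number of weight-$0$ vertices. If such an extremal $\H$ still contained a $\{1,1\}$ edge $\{i,j\}$, I would attempt to produce a separating hyperplane $\H'$ by flipping $a_i$ (or $a_j$) from $1$ to $0$ and thereby derive a contradiction with extremality. The delicate step is verifying that $\H'$ remains separating: formerly zero edges $\{i,p\}$ with $a_p=-1$ turn into new negative edges, and each such new negative edge must be shown to be cycle-compatible (in the sense of Lemma~\ref{edgeofedgepolytope}) with every new positive edge. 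The main obstacle is that if $i$ is adjacent both to a vertex of weight $+1$ (namely $j$) and to a vertex of weight $-1$, the naive flip at $i$ creates a pair of new edges sharing the vertex $i$, which cannot be cycle-compatible; resolving this will require either choosing the flip endpoint carefully (switching to $j$ when the flip at $i$ is obstructed) or combining several flips, in each case leveraging the cycle-compatibilities already guaranteed by the original $\H$ to show that at least one legal flip is always available so long as a $\{1,1\}$ or $\{-1,-1\}$ edge persists.
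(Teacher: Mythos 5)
The paper states this proposition as a quoted result from \cite{HLZ} and contains no proof of its own, so the comparison is with the standard argument from that reference. Your first two steps are fine: the existence of at least one positive and one negative edge follows from integrality of the two pieces essentially as you say, and the case in which no weight is $0$ is immediate. The problem is the remaining case, where your proposal stops at a plan. You propose to take a separating hyperplane maximising the number of weight-$0$ vertices and to eliminate any surviving $\{1,1\}$ edge by flipping an endpoint to $0$; you then correctly identify the obstruction (a flipped vertex adjacent to both a $+1$ and a $-1$ vertex creates a positive and a negative edge sharing a vertex) but leave its resolution as ``choose the flip endpoint carefully or combine several flips.'' That unresolved step is where all the content lies, so as written the argument is incomplete.

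The missing idea is a signature-incompatibility lemma that makes any modification of $\H$ unnecessary. Suppose $e=\{i,j\}$ is positive with $a_i=a_j=1$ and $f=\{k,l\}$ is negative with $a_k=-1$, $a_l=0$. Since $e$ and $f$ must be cycle-compatible, there is a $4$-cycle on $\{i,j,k,l\}$; in it $l$ has two neighbours among $\{i,j,k\}$, hence $l$ is adjacent in $G$ to $i$ or to $j$, giving a positive edge of weight $0+1=1$ that shares the vertex $l$ with the negative edge $f$. Two edges sharing a vertex are never cycle-compatible, so by Lemma~\ref{edgeofedgepolytope} their convex hull is an edge of $\P_G$ joining a strictly positive vertex to a strictly negative one, contradicting the separation property. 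Symmetrically, a $\{1,0\}$ positive edge cannot coexist with a $\{-1,-1\}$ negative edge. Hence either all nonzero edges have signatures $\{1,1\}$ and $\{-1,-1\}$, or all have signatures $\{1,0\}$ and $\{-1,0\}$. In the former case a weight-$0$ vertex could have no neighbour of weight $\pm 1$ (such an edge would have a forbidden signature), so connectedness of $G$ forces there to be no weight-$0$ vertices at all, and we are in case~(I); otherwise we are in case~(II). Your extremal-flip scheme is therefore both unfinished and aimed at a repair that, for a connected graph, never has to be performed.
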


We call edge polytope $\P_G$  is {\em type I {\rm (}or type II{\rm )} decomposable} if there exist a separating hyperplane $\H$ 
satisfying condition (I) (or (II)) in Proposition~\ref{type}.
We say $G$ is decomposable if edge polytope $\P_G$ of $G$ is decomposable.
\begin{Example}
Following graph $G$ is type I and type II decomposable. \\
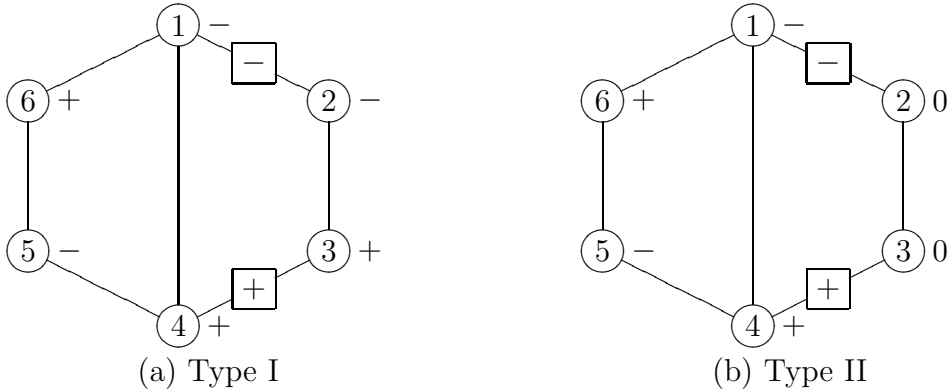
\begin{figure}[htbp]
\begin{tabular}{c}
\begin{minipage}{0.5\hsize}
\begin{xy}
\ar@{-} (20,40) *++!L{\ -} *+{1} *\cir<3mm>{}="1";
(20,0) *++!L{\ +} *+{4} *\cir<3mm>{}="4"
\ar@{-} "4"; (30, 5) *+[F]{+}="+1"
\ar@{-} "+1"; (40,10) *++!L{\ +} *+{3} *\cir<3mm>{}="3"
\ar@{-} "1";  (30,35) *+[F]{-}="-1"
\ar@{-} "-1"; (40,30) *++!L{\ -} *+{2} *\cir<3mm>{}="2"
\ar@{-} "2"; "3"
\ar@{-} "4"; (0,10) *++!L{\ -} *+{5} *\cir<3mm>{}="5"
\ar@{-} "1"; (0,30) *++!L{\ +} *+{6} *\cir<3mm>{}="6"
\ar@{-} "5"; "6"
\end{xy}
\hspace{1.6cm} (a) Type I
\end{minipage}

\begin{minipage}{0.5\hsize}
\begin{xy}
\ar@{-} (20,40) *++!L{\ -} *+{1} *\cir<3mm>{}="1";
(20,0) *++!L{\ +} *+{4} *\cir<3mm>{}="4"
\ar@{-} "4"; (30, 5) *+[F]{+}="+1"
\ar@{-} "+1"; (40,10) *++!L{\ 0} *+{3} *\cir<3mm>{}="3"
\ar@{-} "1";  (30, 35) *+[F]{-}="-1"
\ar@{-} "-1"; (40,30) *++!L{\ 0} *+{2} *\cir<3mm>{}="2"
\ar@{-} "2"; "3"
\ar@{-} "4"; (0,10) *++!L{\ -} *+{5} *\cir<3mm>{}="5"
\ar@{-} "1"; (0,30) *++!L{\ +} *+{6} *\cir<3mm>{}="6"
\ar@{-} "5"; "6"
\end{xy}
\hspace{1.6cm} (b) Type II
\end{minipage}
\end{tabular}
\caption{Type I and II decompositions}\label{hex}
\end{figure}

The type I decomposition of $\P_G$ is given by the separating hyperplane $\H :  -x_{1}-x_{2}+x_{3}+x_{4}-x_{5}+x_{6}=0$. The type II decomposition is given by the separating hyperplane $\H : -x_{1}+x_{4}-x_{5}+x_{6}=0$.
\end{Example}
Following the results by Hibi, Li and Zhang, we study decomposable edge polytopes.
\begin{Proposition}
Let $G$ be a finite simple graph and $H_1,\ldots H_n$ its connected components with $G = \bigsqcup_{i=1}^n H_i$. Edge polytope $\P_G$ is decomposable if and only if there exists a connected component $H_j$ such that $\P_{H_j}$ is decomposable.
\end{Proposition}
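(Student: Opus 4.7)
My plan rests on the observation that the subpolytopes $\P_{H_1},\ldots,\P_{H_n}$ live in mutually orthogonal coordinate subspaces of $\RR^d$, since the vertex sets $V(H_i)$ are pairwise disjoint. The key structural input I would record first is a consequence of Lemma~\ref{edgeofedgepolytope}: whenever $e \in E(H_i)$ and $f \in E(H_k)$ with $i \neq k$, the pair $(e,f)$ cannot be cycle-compatible, because no edge of $G$ crosses between components, so the induced subgraph on the four endpoints contains no $4$-cycle. Hence $\mathrm{co}(\rho(e),\rho(f))$ is always an edge of $\P_G$. This ``cross-component edge'' phenomenon is what drives the whole argument.

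\medskip

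\emph{The ``only if'' direction, which I regard as the main content.} Assume $\P_G$ is decomposable via $\H=\{\sum_{i=1}^d a_i x_i = 0\}$ with $a_i\in\{0,1,-1\}$ (Proposition~\ref{hyperplaneform}); by Proposition~\ref{type} there is a positive edge $e^+$ and a negative edge $e^-$. The crux is to show that \emph{every} positive edge and \emph{every} negative edge lies in a single common connected component $H_j$. Otherwise, we would find a positive edge $e$ and a negative edge $f$ in distinct components; by the preliminary observation, $\mathrm{co}(\rho(e),\rho(f))$ would be an edge of $\P_G$ strictly crossed by $\H$ at an interior point, contradicting the separating property. Once the action is localized to $H_j$, I would define $\H_j := \{\sum_{i \in V(H_j)} a_i x_i = 0\}$ inside the coordinate subspace indexed by $V(H_j)$, and verify: (i) it is a genuine hyperplane, since $e^+,e^- \in E(H_j)$ force some $a_i\neq 0$; (ii) it meets the relative interior of $\P_{H_j}$, using the standard convexity fact that a hyperplane strictly separating two points of a convex set always cuts through its relative interior; (iii) any two edges of $H_j$ that $\H_j$ separates strictly are cycle-compatible, because the witnessing $4$-cycle in $G$ must in fact lie inside the single component $H_j$.

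\medskip

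\emph{The ``if'' direction.} Conversely, given a separating hyperplane $\H_j = \{\sum_{i\in V(H_j)} a_i x_i = 0\}$ of $\P_{H_j}$ with $a_i \in \{0,1,-1\}$, I would extend it to $\RR^d$ by zero-padding: set $\tilde a_i = a_i$ for $i\in V(H_j)$ and $\tilde a_i = 0$ otherwise, obtaining $\H = \{\sum_{i=1}^d \tilde a_i x_i = 0\}$. Every vertex $\rho(f)$ of $\P_G$ with $f\notin E(H_j)$ then lies on $\H$. A case check on the three edge types of $\P_G$---edges internal to some $\P_{H_i}$ with $i\neq j$ (contained in $\H$), ``mixed'' edges between $\P_{H_j}$ and some $\P_{H_i}$ (meeting $\H$ only at the $\P_{H_i}$-endpoint unless fully in $\H$), and edges internal to $\P_{H_j}$ (behaving as under $\H_j$)---shows that $\H$ separates $\P_G$ properly, and the existence of $\rho(e^+),\rho(e^-)$ of opposite signs again ensures $\H$ meets the relative interior of $\P_G$ by the same convexity fact.

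\medskip

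\emph{Main obstacle.} The only step with real content is the concentration claim in the ``only if'' direction, namely that all positive and all negative edges must share a single connected component. The rest amounts to bookkeeping around the zero extension and restriction of $\H$. The engine that makes concentration work is precisely the cross-component edge observation supplied by Lemma~\ref{edgeofedgepolytope}.
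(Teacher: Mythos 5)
Your proposal is correct and follows essentially the same route as the paper: both arguments hinge on the observation that a positive edge and a negative edge in different components can never be cycle-compatible, so $\mathrm{co}(e,f)$ would be an edge of $\P_G$ improperly cut by $\H$, forcing all non-zero edges into one component; and both handle the converse by zero-padding the separating hyperplane of $\P_{H_j}$. Your write-up is somewhat more careful about verifying that the restricted/extended hyperplanes genuinely meet the relative interiors, but the underlying argument is the same.
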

\begin{proof}
(``If'') We show this part by giving sign arrangements of separating hyperplanes. If $G$ satisfies (i), 
then we have a sign arrangement of $H_j$. we may set all the signs of the rest vertices 0. 
Then this sign arrangement implies that $G$ is decomposable.\\
(``Only if'') Suppose $G$ is decomposable and a separating hyperplane of $\P_G$ is given. First we show that $G$ has exactly one connected component which has a non-zero edge. Since $G$ is decomposable,
we have at least one positive edge and at least one negative edge.
If we have positive edge $e$ and negative edge $f$ in different connected components, then by applying Lemma~\ref{edgeofedgepolytope} $\rm{co}(e,f)$ is an edge of $\P_G$. The separating hyperplane cuts $co(e,f)$ into positive and negative part, a contradiction. 
Therefore we must have all the positive and negative edges in the same connected component.
Assume $H_j$ has positive and negative edges.
Remark that Proposition \ref{hyperplaneform} and \ref{type} was originally given for connected graphs
in \cite{HLZ} but we may apply these propositions to each connected components.
For each connected component $H_i$ with $i\neq j$, there are no non-zero edges. We set the weights of all vertices of $H_i$ 0.
We have to show cycle compatability of positive and negative edges in $H_j$. In fact, it easily follows from decomposability of $G$.
\end{proof}

This Proposition means that studying decomposability of connected graphs is also important to
discuss decomposability of disconnected graphs. So we study decomposability of connected graphs from now.
\if
By using the fact; for any complete multipartite graph $G$, every vertex disjoint two edges are cycle compatible, we can say the following.

\begin{Lemma}\label{ccc}
Complete multipartite graphs are type I decomposable if and only if there exist vertex disjoint two edges.
\end{Lemma}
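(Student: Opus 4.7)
The plan is to lean on Proposition~\ref{type} together with the highlighted fact that any two vertex-disjoint edges of a complete multipartite graph are cycle-compatible; after that, both directions reduce to essentially bookkeeping on a $\pm 1$ weight vector.

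For the ``only if'' direction I would start from a type I decomposition and invoke Proposition~\ref{type} to obtain a $\{+1,-1\}$ weighting with at least one positive edge and at least one negative edge. Since positive edges have signature $\{1,1\}$ and negative edges have signature $\{-1,-1\}$, any positive/negative pair is automatically vertex-disjoint, so the mere existence of positive and negative edges already supplies the required pair of vertex-disjoint edges in $G$. This direction uses nothing about the multipartite structure.

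For the ``if'' direction, given vertex-disjoint edges $e=\{i_1,i_2\}$ and $f=\{j_1,j_2\}$, I would set $a_{i_1}=a_{i_2}=+1$, $a_{j_1}=a_{j_2}=-1$, and fill in every remaining vertex with $+1$, then take $\H=\{\sum_v a_v x_v=0\}$. By construction no vertex has weight $0$, $e$ is positive and $f$ is negative (so $\rho(e)$ and $\rho(f)$ lie on opposite open sides of $\H$ and $\H$ is not a supporting hyperplane), and every positive (resp.\ negative) edge has signature $\{1,1\}$ (resp.\ $\{-1,-1\}$), matching case (I) of Proposition~\ref{type}. What remains is the cycle-compatibility check: any positive edge and any negative edge are vertex-disjoint by the signature observation above, and the cited fact about complete multipartite graphs then makes them cycle-compatible. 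Combined with Lemma~\ref{edgeofedgepolytope} and the edge-slicing criterion recorded after it, $\H$ is a separating hyperplane of type I.

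There is no real main obstacle; the whole content is packaged into the cycle-compatibility fact stated just before the lemma. The only mild cautions are to fill in the unused vertex weights with $\pm 1$ (not $0$) so as to remain in case (I), and to note that the ``arbitrary'' choice on those remaining vertices is unconstrained precisely because cycle-compatibility holds for \emph{every} vertex-disjoint pair in a complete multipartite graph, so no additional positive/negative pair created by the fill-in can spoil the construction.
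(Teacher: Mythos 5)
Your proof is correct and matches the paper's argument for the ``if'' direction essentially verbatim: weight the two disjoint edges $\{1,1\}$ and $\{-1,-1\}$, fill the remaining vertices with $\pm 1$, and observe that in a type I arrangement every positive/negative pair is vertex-disjoint and hence cycle-compatible by the complete multipartite property. For the ``only if'' direction the paper merely cites \cite[Theorem~2.2]{HLZ}, whereas you give the short direct argument from Proposition~\ref{type} (a positive edge has signature $\{1,1\}$ and a negative edge $\{-1,-1\}$, so they cannot share a vertex); this is a valid, self-contained replacement for the citation and changes nothing of substance.
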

\begin{proof}
The "only if'' part is known by using \cite[Theorem~2.2]{HLZ}.
Let $e, f$ be vertex disjoint two edges. Assume that $e$ is a positive edge with weight $\{1, 1\}$, $f$ is a negative edge with weight $\{-1, -1\}$. We can set weights either $1$ or $-1$ for all the other vertices. In type I sign arrangement, we do not have a pair of positive and a negative edge that has a common vertex. Thus every pair of positive and negative edges are cycle-compatible, as desired. 
\end{proof}
\fi

In the following example, we use an easy operation. We consider graph $G$ and path of length 3 $P_{3}=(x_{0}, x_{1}, x_{2}, x_{3})$. for any edge $\{i, j\} \in E(G)$, we join the path as  $x_{0}=i$, $x_{3}=j$, $x_{1}\neq k$, $x_{2}\neq k$ $(k \in V(G))$ and get the new graph. We call this operation, {\em attach} a 4-cycle to $G$ at edge $\{i,j\}$.

\begin{Example}
Let $G$ be a graph given in Figure~\ref{box2} (a).
After attaching a 4-cycle to $G$ at edge $\{2,3\}$, we have the graph Figure~\ref{box2} (b). 

\begin{figure}[htbp]
\begin{tabular}{c}
\begin{minipage}{0.33\hsize}
\begin{xy}
\ar@{-} (20,40) *+{1} *\cir<3mm>{}="1"; (20,0) *+{4} *\cir<3mm>{}="4"
\ar@{-} "4"; (40,10) *+{3} *\cir<3mm>{}="3"
\ar@{-} "1"; (40,30) *+{2} *\cir<3mm>{}="2"
\ar@{-} "2"; "3"
\ar@{-} "4"; (0,10) *+{5} *\cir<3mm>{}="5"
\ar@{-} "1"; (0,30) *+{6} *\cir<3mm>{}="6"
\ar@{-} "5"; "6"
\end{xy}
\hspace{1.6cm} (a)
\end{minipage}
\begin{minipage}{0.33\hsize}
\begin{xy}
\ar@{-} (20,40) *+{1} *\cir<3mm>{}="1"; (20,0) *+{4} *\cir<3mm>{}="4"
\ar@{-} "4"; (40,10) *+{3} *\cir<3mm>{}="3"
\ar@{-} "1"; (40,30) *+{2} *\cir<3mm>{}="2"
\ar@{-} "2"; "3"
\ar@{-} "4"; (0,10) *+{5} *\cir<3mm>{}="5"
\ar@{-} "1"; (0,30) *+{6} *\cir<3mm>{}="6"
\ar@{-} "5"; "6"
\ar@{-} "2" ; (60,30) *+{x_{1}} *\cir<3mm>{}="x_{1}"
\ar@{-} "x_{1}"; (60,10) *+{x_{2}} *\cir<3mm>{}="x_{2}"
\ar@{-} "x_{2}"; "3"
\end{xy}
\hspace{1.6cm} (b)
\end{minipage}
\end{tabular}
\caption{Attaching 4-cycle}\label{box2}
\end{figure}
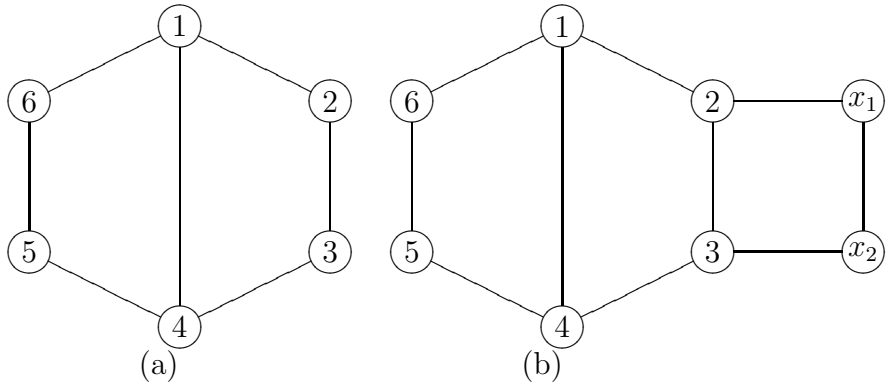
\end{Example}

\begin{Proposition}\label{box}
Let $G$ be a finite connected graph.
Let $G'$ be a graph obtained by attaching a 4-cycle to any edge of $G$.
Then, $P_{G'}$ is type II decomposable regardless of which edge we choose.
\end{Proposition}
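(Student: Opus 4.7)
The plan is to exhibit an explicit type II sign assignment on $V(G')$ and then verify the cycle-compatibility condition needed for the associated hyperplane to be separating. The natural choice is to keep every vertex of $G$ neutral and concentrate the nonzero weights on the two new vertices: set $a_{x_1}=1$, $a_{x_2}=-1$, and $a_v=0$ for every $v\in V(G)$. Let $\H$ denote the hyperplane $\sum_k a_k x_k=0$.

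First I would determine the sign of every edge of $G'$ under this weight function. Every edge of $G\subseteq G'$ has both endpoints of weight $0$ and hence is a zero edge of signature $\{0,0\}$; the new edge $\{x_1,x_2\}$ is a zero edge of signature $\{1,-1\}$; the edge $\{i,x_1\}$ has signature $\{0,1\}$ and is positive; and the edge $\{x_2,j\}$ has signature $\{-1,0\}$ and is negative. In particular there is at least one positive and at least one negative edge, at least one vertex of weight $0$, and the signature constraints of Proposition~\ref{type}(II) are satisfied.

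Next I would confirm that $\H$ is a separating hyperplane. Since $\rho(\{i,x_1\})\in\H^{(+)}\setminus\H$ and $\rho(\{x_2,j\})\in\H^{(-)}\setminus\H$, the hyperplane $\H$ is not a supporting hyperplane of $\P_{G'}$. By the equivalent condition for decomposability stated just before Proposition~\ref{hyperplaneform} (combined with Lemma~\ref{edgeofedgepolytope}), it remains only to verify that every pair $(e,f)$ of a positive edge $e$ and a negative edge $f$ in $G'$ is cycle-compatible. Here the unique such pair is $(\{i,x_1\},\{x_2,j\})$, and its four endpoints $\{i,x_1,x_2,j\}$ induce the $4$-cycle $i,x_1,x_2,j,i$ in $G'$, whose closing edge is the original edge $\{i,j\}$ of $G$ to which the path was attached. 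Hence the pair is cycle-compatible, so $\H$ is a type II separating hyperplane and $\P_{G'}$ is type II decomposable.

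There is essentially no obstacle here: the argument is a direct verification once the correct weights are identified, and it works uniformly for every choice of edge $\{i,j\}$. The one conceptual point to notice is that the original edge $\{i,j\}\in E(G)$ is precisely what supplies the closing edge of the $4$-cycle needed for cycle-compatibility, which is why the hypothesis attaches the new path at an existing edge of $G$ rather than at an arbitrary pair of vertices.
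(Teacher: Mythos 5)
Your proposal is correct and follows essentially the same route as the paper's proof: both assign weight $1$ and $-1$ to the two newly attached vertices, weight $0$ everywhere else, and observe that the unique positive--negative pair of edges is cycle-compatible via the attached $4$-cycle. Your write-up simply spells out the verification (signatures, non-supporting condition, and the explicit $4$-cycle) in more detail than the paper does.
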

\begin{proof}
Let $i,j$ be the vertices of $G'$ with $i \neq j$ which are not contained in V(G).
We set the weight $1$ for $i$  and $-1$ for $j$. We set the weight $0$ to all the other vertices in $V(G')$. Then the unique pair of the positive and negative edges are  cycle-compatible.
\end{proof}
We call the graph in Figure~\ref{tripan} (a) {\em tri-pan}. 
We consider two tri-pans, and name the vertices as Figure~\ref{tripan} (b) and join two tri-pans as $x=x'$, $y_{1}=y_{1}'$, then we get the following graph in Figure~\ref{tripan} (c).

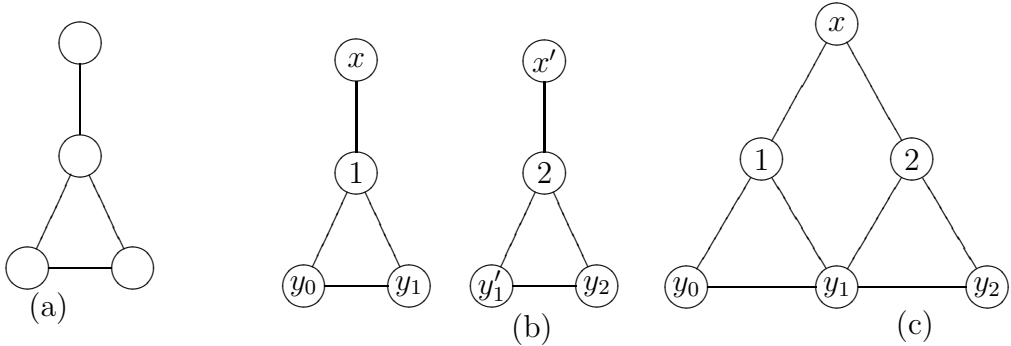
\begin{figure}[htbp]
\begin{tabular}{c}
\begin{minipage}{0.33\hsize}
\begin{xy}
\ar@{} (0,30) *+{} ; (23,30)  *\cir<3mm>{}="x"
\ar@{-} "x"; (23,15)  *\cir<3mm>{}="1"
\ar@{-} "1"; (16,0)  *\cir<3mm>{}="y_{0}"
\ar@{-} "1"; (30,0)  *\cir<3mm>{}="y_{1}"
\ar@{-} "y_{0}"; "y_{1}"
\end{xy}
\hspace{1.6cm} (a)
\end{minipage}
\begin{minipage}{0.33\hsize}
\begin{center}
\begin{xy}
\ar@{-} (7,30) *+{x} *\cir<3mm>{}="x"; (7,15) *+{1} *\cir<3mm>{}="1"
\ar@{-} "1"; (0,0) *+{y_{0}} *\cir<3mm>{}="y_{0}"
\ar@{-} "1"; (14,0) *+{y_{1}} *\cir<3mm>{}="y_{1}"
\ar@{-} "y_{0}"; "y_{1}"
\ar@{-} (32,30) *+{x'} *\cir<3mm>{}="x"; (32,15) *+{2} *\cir<3mm>{}="2"
\ar@{-} "2"; (25,0) *+{y_{1}'} *\cir<3mm>{}="y_{1}'"
\ar@{-} "2"; (39,0) *+{y_{2}} *\cir<3mm>{}="y_{2}"
\ar@{-} "y_{1}'"; "y_{2}"
\ar@{} (7,38) ; "x"
\end{xy}
\hspace{1.6cm} (b)
\end{center}
\end{minipage}
\begin{minipage}{0.33\hsize}
\begin{center}
\begin{xy}
\ar@{-} (40,35) *+{x} *\cir<3mm>{}="x"; (30,17) *+{1} *\cir<3mm>{}="1"
\ar@{-} "1"; (20,0) *+{y_{0}} *\cir<3mm>{}="y_{0}"
\ar@{-} "1"; (40,0) *+{y_{1}} *\cir<3mm>{}="y_{1}"
\ar@{-} "y_{0}"; "y_{1}"
\ar@{-} "x"; (50,17) *+{2} *\cir<3mm>{}="2"
\ar@{-} "2"; "y_{1}"
\ar@{-} "2"; (60,0) *+{y_{2}} *\cir<3mm>{}="y_{2}"
\ar@{-} "y_{1}"; "y_{2}"
\end{xy}
\hspace{1.6cm} (c)
\end{center}
\end{minipage}
\end{tabular}
\caption{Joined tri-pan}\label{tripan}
\end{figure}
We call the graph in Figure~\ref{tripan} (c)
Similarly, we define $n$-joined tri-pan $T(n)$. For example, Figure~\ref{ntripan} is $T(5)$.
\begin{figure}[htbp]
\begin{center}
\begin{xy}
\ar@{-} (225,25)  *\cir<3mm>{}="x"; (185,13) *\cir<3mm>{}="1"
\ar@{-} "x"; (205,13) *\cir<3mm>{}="2"
\ar@{-} "x"; (225,13) *\cir<3mm>{}="k-1"
\ar@{-} "x"; (245,13) *\cir<3mm>{}="k"
\ar@{-} "x"; (265,13) *\cir<3mm>{}="5"
\ar@{-} "1"; (175,0) *\cir<3mm>{}="y_{0}"
\ar@{-} "1"; (195,0) *\cir<3mm>{}="y_{1}"
\ar@{-} "2"; "y_{1}"
\ar@{-} "2"; (215,0) *\cir<3mm>{}="y_{2}"
\ar@{-} "k-1"; "y_{2}"
\ar@{-} "k-1"; (235,0) *\cir<3mm>{}="y_{k-1}"
\ar@{-} "k"; "y_{k-1}"
\ar@{-} "k"; (255,0) *\cir<3mm>{}="y_{4}"
\ar@{-} "5"; "y_{4}"
\ar@{-} "5"; (275,0) *\cir<3mm>{}="y_{5}"
\ar@{-} "y_{0}"; "y_{1}"
\ar@{-} "y_{2}"; "y_{1}"
\ar@{-} "y_{2}"; "y_{k-1}"
\ar@{-} "y_{k-1}"; "y_{4}"
\ar@{-} "y_{4}"; "y_{5}"
\end{xy}
\end{center}
\caption{$5$-joined tri-pan}\label{ntripan}
\end{figure}
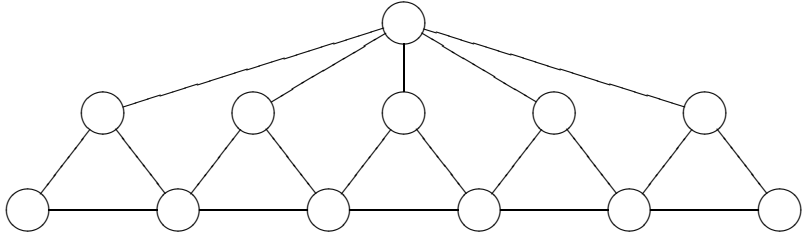
Let $N_G(k)$ denote that neighbor set of k in $G$
\bigskip
\begin{Proposition}\label{indecomposable}
Suppose that $T(n)$ is a $n$-joined tri-pan. Then $\P_{T(n)}$ is indecomposable.
\end{Proposition}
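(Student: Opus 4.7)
I argue by contradiction: suppose $\P_{T(n)}$ is decomposable. By Propositions~\ref{hyperplaneform} and~\ref{type}, a separating hyperplane can be chosen with integer weights $a_v\in\{-1,0,1\}$ realizing either Type I or Type II. The linchpin of the proof is a complete description of the $4$-cycles of $T(n)$: I claim they are exactly the $n-1$ ``sectors'' $C_i = xv_iy_iv_{i+1}$ for $i = 1,\ldots,n-1$. Indeed, $x$'s only neighbors are the $v_k$, so every $4$-cycle through $x$ reads $x-v_i-y-v_j-x$ with $y$ a common $y$-neighbor of $v_i$ and $v_j$, which forces $|i-j|=1$; meanwhile any $4$-cycle missing $x$ would live in the graph obtained by deleting $x$, where the $v$'s are independent and pairwise share at most one common $y$-neighbor, and the $y$'s form a simple path, leaving no $4$-cycle. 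By Lemma~\ref{edgeofedgepolytope}, vertex-disjoint edges are cycle-compatible precisely when they form the two ``opposite edges'' of some $C_i$.

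From this I conclude that the edges $y_0y_1,\ldots,y_{n-1}y_n$, $v_1y_0$, and $v_ny_n$ lie in no $C_i$, so they are cycle-compatible with nothing. Since every positive (respectively negative) edge demands a cycle-compatible partner of the opposite sign, each of these edges must be a zero edge. The relations $a_{y_i}+a_{y_{i+1}}=0$ then split into two cases: (A) $a_{y_i}=0$ for every $i$, or (B) $a_{y_i}=(-1)^i a_{y_0}$ with $a_{y_0}\in\{\pm 1\}$; additionally $a_{v_1}=-a_{y_0}$ and $a_{v_n}=-a_{y_n}$.

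In Case A, we also have $a_{v_1}=a_{v_n}=0$, placing us in Type II. If $a_x=0$, the only non-zero edges arise from $a_{v_i}\ne 0$, and having both signs yields a positive--negative pair $(xv_i, xv_j)$ sharing the vertex $x$, which violates cycle-compatibility. If $a_x=\pm 1$, WLOG $+1$ (the other case is symmetric), then $xv_1$ is positive; to produce a negative edge we need $a_{v_i}=-1$ for some interior $i$, but then $v_iy_i$ is a negative edge and $(xv_1, v_iy_i)$ fails cycle-compatibility because the induced subgraph on $\{x,v_1,v_i,y_i\}$ contains only the three edges $xv_1, xv_i, v_iy_i$ for $i\ge 2$ (the edge $v_1y_i$ is absent), so no $4$-cycle exists.

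In Case B, I may assume $a_{y_0}=1$, giving $a_{v_1}=-1$ and $a_{y_1}=-1$, so the edge $v_1y_1$ carries signature $\{-1,-1\}$. Type II forbids this signature, so we must be in Type I and $v_1y_1$ is a negative edge; its unique cycle-compatible partner is $xv_2$ (via $C_1$), which must therefore be the only positive edge, forcing $a_x=a_{v_2}=1$. For $n\ge 3$ this makes $v_2y_2$ a second positive edge of signature $\{1,1\}$, and $(v_2y_2, v_1y_1)$ is not cycle-compatible since the induced subgraph on $\{v_1,y_1,v_2,y_2\}$ has only the edges $v_1y_1, v_2y_1, v_2y_2, y_1y_2$, which do not form a $4$-cycle — a contradiction. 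For $n=2$, the zero-edge requirement $v_ny_n=v_2y_2=0$ forces $a_{v_2}=-1$, directly contradicting $a_{v_2}=1$. The main obstacle I expect is the careful bookkeeping between the forced zero-edge conditions and the signature rules of Types I and II; once these are organized via the cases above, each branch terminates quickly.
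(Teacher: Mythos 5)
Your proof is correct, and it is organized quite differently from the paper's. The paper's argument starts from the existence of a cycle-compatible positive--negative pair, places it (up to symmetry) on a sector $x\,v_{k-1}\,y_{k-1}\,v_k$, and derives a contradiction by inspecting the weight of one adjacent vertex ($y_{k-2}$, resp.\ the common neighbour of $v_k$ and $y_{k-1}$); it treats Type I and Type II by separate local arguments and is rather terse about why the stated weight patterns are the only ones to consider. You instead begin with a complete classification of the $4$-cycles of $T(n)$ (only the $n-1$ sectors $C_i=xv_iy_iv_{i+1}$, which are chordless), deduce that every edge lying on no $C_i$ --- the whole $y$-path together with $v_1y_0$ and $v_ny_n$ --- is forced to be a zero edge, and then let the relations $a_{y_{i-1}}+a_{y_i}=0$ propagate globally, splitting into the all-zero case (necessarily Type II) and the alternating case (necessarily Type I, via the forbidden signature $\{-1,-1\}$ on $v_1y_1$). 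This buys you a uniform treatment of both types, an explicit handling of $n=2$ versus $n\ge 3$, and it absorbs the paper's separate remark about $T(1)$ (where every edge is isolated from all $4$-cycles, so no nonzero edge can exist). The one point worth stating explicitly in your write-up is the justification that two disjoint edges of $T(n)$ are cycle-compatible only if their four endpoints are exactly the vertex set of some $C_i$ and hence, since $C_i$ has no chords ($x\not\sim y_i$, $v_i\not\sim v_{i+1}$), only if they are opposite edges of $C_i$; you assert this correctly but it is the hinge on which the ``forced zero edge'' step turns. With that spelled out, every branch of your case analysis closes, and the argument is, if anything, more complete than the one in the paper.
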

\begin{proof}
$\P_{T(1)}$ is indecomposable because there is no 4-cycle in $T(1)$.
Suppose that $n\ge2$.
First, we check type I decomposablity.
Assume that $\P_{T(n)}$ is type I decomposable. Fix an sign arrangement of $G$.
Since $\P_{T(n)}$ is decomposable, we have at least one 4 cycle with positive and nagative edge in $T(n)$ that satisfies cycle-compatability.
Assume that the 4 cycle is $(x,k-1,y_{k-1},k)$ in Figure~\ref{proof}.
Without losing generality, we may assume weights of vertices $x,k-1,y_{k-1},k$ are $1, 1, -1, -1 $ or $-1, -1, 1 , 1$.
In the first case, take a look at the weight of $y_{k-2}$. It is not possible to suppose the weight of $y_{k-2}$ is $1$ because then a positive edge $\{y_{k-2}, k-1 \}$ and a negative edge ${y_k-1}, k$ do not satisfy cycle compatability.
On the other hand, we can not set the weight $-1$ for $y_{k-2}$ because again from cycle compatability of positive and negative edges. 
Similarly, if we are in the second case, we can not choose the 4 cycle without losing cycle compatability.
Next, we check type II decomposability.
Suppose that $T(n)$ is type II decomposable.
We have a 4-cycle $(x, k-1 , y_{k-1}, k)$ with weights $\{1,-1,0,0\}$ 
The weight of the edge $\{k-1, y_{k-1} \}$ must be $\{0,0\}$. Otherwise, We can not set the weight of the vertex of $N_{T(n)}(k-1)\cap N_{T(n)}(y_{k-1})$ because one of the following occurs.
\begin{itemize}
\item[(i)] A weight of a edge is $\{1,1\}$ or $\{-1,-1 \}$.
\item[(ii)] One of the positive edge and one of the negative edge have a common vertex.
\end{itemize}
Assume that the weight of $\{k-1 ,y_{k-1}\}$ is $\{0,0\}$ then the weight of $k$ is either $1$ or $-1$.
Let $N(k)\cap N(y_{k-1}) = y_{k}$. Then, we can not set the weight for $y_k$ because again one of the above occurs.

\begin{figure}[h]
\begin{xy}
\ar@{-} (225,25) *+{x} *\cir<3mm>{}="x"; (185,13) *\cir<3mm>{}="1"
\ar@{-} "x"; (205,13) *\cir<3mm>{}="2"
\ar@{-} "x"; (225,13) *+{k-1} *\cir<5mm>{}="k-1"^{+}
\ar@{-} "x"; (245,13) *+{k} *\cir<5mm>{}="k"_{0}
\ar@{-} "x"; (265,13) *\cir<3mm>{}="5"
\ar@{-} "1"; (175,0) *\cir<3mm>{}="y_{0}"
\ar@{-} "1"; (195,0) *\cir<3mm>{}="y_{1}"
\ar@{-} "2"; "y_{1}"
\ar@{-} "2"; (215,0) *+{y_{k-2}} *\cir<5mm>{}="y_{2}"
\ar@{-} "k-1"; "y_{2}"
\ar@{-} "k-1"; (235,0) *+{y_{k-1}} *\cir<5mm>{}="y_{k-1}"^{0}
\ar@{-} "k"; "y_{k-1}"^{-}
\ar@{-} "k"; (255,0) *\cir<3mm>{}="y_{4}"
\ar@{-} "5"; "y_{4}"
\ar@{-} "5"; (275,0) *\cir<3mm>{}="y_{5}"
\ar@{-} "y_{0}"; "y_{1}"
\ar@{-} "y_{2}"; "y_{1}"
\ar@{-} "y_{2}"; "y_{k-1}"
\ar@{-} "y_{k-1}"; "y_{4}"
\ar@{-} "y_{4}"; "y_{5}"
\end{xy}
\caption{}\label{proof}
\end{figure}
\end{proof}
\begin{Theorem}
There exist infinite number of graphs that is both type I and type II decomposable.
Similary, one has type I but not type II, type II but not type I and neither type I nor type II.

\begin{itemize}
\item[(i)]Let $G$ be a complete multipartite graphs with at least $4$ vertices. Let $G'$ be a graph 
obtained by attaching a 4-cycle at any edge of $G$. Then $\P_{G'}$ is type I and II decomposable.
\item[(ii)]Let $G$ be a complete graph with at least $4$ vertices. Then $\P_G$ is type I decomposable and type II indecomposable.
\item[(iii)]Let T(n) be a $n$-joined tri-pan with $n\ge2$ and $G$ be a graph obtained by attaching a 4-cycle at any edge of $T(n)$. Then $\P_G$ is type II decomposable and type I indecomposable.
\item[(iv)]Let $T(n)$ be a $n$-joined tri-pan. Then $\P_{T(n)}$ is indecomposable.
\end{itemize}
\end{Theorem}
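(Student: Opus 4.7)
The theorem has four parts; part (iv) is exactly Proposition~\ref{indecomposable}, so only (i), (ii), and (iii) need fresh work. My plan is to establish each decomposability claim by an explicit sign arrangement, and each indecomposability claim by a structural obstruction exploiting the fact that cycle-compatibility is controlled by $4$-cycles (Lemma~\ref{edgeofedgepolytope}).

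For (i), type II decomposability of $G'$ is immediate from Proposition~\ref{box}. For type I, I would assign weight $-1$ to the two endpoints $i,j$ of the edge to which the $4$-cycle is attached, weight $+1$ to the two new vertices $x_1,x_2$ in the attached cycle, and weight $+1$ to every other vertex of $G$. Then the unique negative edge is $\{i,j\}$, while every positive edge is either $\{x_1,x_2\}$ or lies inside $V(G)\setminus\{i,j\}$; cycle-compatibility of $\{x_1,x_2\}$ with $\{i,j\}$ is witnessed by the attached $4$-cycle, and cycle-compatibility of every other positive edge with $\{i,j\}$ reduces to the elementary fact that any two vertex-disjoint edges of a complete multipartite graph lie on a common $4$-cycle in that graph. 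For (ii), type I decomposability of $K_n$ follows from any partition $V(K_n)=V_+\cup V_-$ with $|V_+|,|V_-|\ge 2$, since every $4$-subset of $V(K_n)$ induces $K_4$ and so any two vertex-disjoint edges are automatically cycle-compatible. For type II indecomposability, if some vertex $z$ had weight $0$, then completeness would force $z$ to be adjacent to a $+1$-weighted vertex $u$ and a $-1$-weighted vertex $v$, making $\{z,u\}$ positive and $\{z,v\}$ negative; they share $z$ and so cannot be cycle-compatible.

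For (iii), type II decomposability of $G'$ is again immediate from Proposition~\ref{box}; the serious content is type I indecomposability. The starting point is that the only $4$-cycles of $G'$ are the $4$-cycles $(x,k-1,y_{k-1},k)$ already present in $T(n)$ together with the new $4$-cycle $(i,x_1',x_2',j)$, and the only $T(n)$-edge belonging to the new $4$-cycle is $\{i,j\}$. Assuming a type I sign arrangement $\sigma$ exists on $G'$, I would split into cases on $\sigma|_{T(n)}$. If $\sigma|_{T(n)}$ yields both positive and negative edges inside $T(n)$, then for every internal pair cycle-compatibility in $G'$ is equivalent to cycle-compatibility in $T(n)$, so $\sigma|_{T(n)}$ would be a type I decomposition of $T(n)$, contradicting Proposition~\ref{indecomposable}. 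If $\sigma|_{T(n)}$ yields no nonzero edges, then $\sigma|_{T(n)}$ is a proper $2$-coloring of $T(n)$, impossible because $(1,y_0,y_1)$ is an odd cycle. The remaining case, up to sign symmetry, is that $\sigma|_{T(n)}$ has only positive edges, so every negative edge of $G'$ must lie among $\{i,x_1'\},\{x_1',x_2'\},\{x_2',j\}$; for any such negative edge, cycle-compatibility with a positive $T(n)$-edge can only be realized through the new $4$-cycle, which forces the positive partner to coincide with $\{i,j\}$. A short enumeration of the sign patterns on $\{i,j,x_1',x_2'\}$ then either produces an immediate contradiction or traps us in the subcase where $V(T(n))=A\cup B$ with $A=V_+\cap V(T(n))$ supporting only the edge $\{i,j\}$ and $B=V_-\cap V(T(n))$ independent in $T(n)$. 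I would close this last subcase by observing that each triangle $(m,y_{m-1},y_m)$ of $T(n)$ must contain at least two vertices of $A$ (since $B$ is independent) and at most two (since $A$ supports only one edge), hence contributes exactly one edge to $T(n)[A]$; as the $n$ edges contributed by the $n$ triangles are pairwise distinct, we obtain $n\le 1$, contradicting $n\ge 2$.

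The main obstacle is the last subcase in (iii): the attached $4$-cycle briefly expands the list of cycle-compatible pairs and must be tracked carefully, and the final triangle-counting argument---which is what genuinely uses the hypothesis $n\ge 2$---depends on noticing that each of the $n$ triangles of $T(n)$ independently forces a fresh edge into $T(n)[A]$.
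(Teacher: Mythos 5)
Your proposal is correct, and for parts (i), (ii) and the type~II halves of (i) and (iii) it follows essentially the same route as the paper: type~II decomposability of the attached graphs via Proposition~\ref{box}, an explicit $\pm 1$ assignment concentrated on the attached edge for type~I decomposability of (i) (the paper puts $+1$ on the two endpoints of the chosen edge and $-1$ everywhere else, which is your arrangement with the global sign flipped), the same two-block partition for $K_d$, and the same ``a $0$-weighted vertex of a complete graph is adjacent to both a $+1$ and a $-1$ vertex'' obstruction for type~II indecomposability of $K_d$ --- your phrasing of that last point is in fact cleaner than the paper's, which nominally exhibits four distinct vertices of weights $1,0,-1,0$ and then asserts a shared vertex. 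The genuine divergence is the type~I indecomposability in (iii): the paper disposes of it in one sentence by pointing back at the proof of Proposition~\ref{indecomposable}, whereas you give a self-contained argument --- classify the $4$-cycles of $G'$ (only the old ones of $T(n)$ plus the single new one through $x_1',x_2'$), split on whether $\sigma|_{T(n)}$ has both signs (reduce to Proposition~\ref{indecomposable}), no nonzero edges (contradicts the triangles of $T(n)$), or one sign only (force the unique negative edge to be $\{x_1',x_2'\}$ and the unique positive $T(n)$-edge to be $\{i,j\}$), and finish by counting that each of the $n$ edge-disjoint triangles of $T(n)$ must contribute a distinct edge to $T(n)[V_+]$, giving $n\le 1$. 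This costs more work but buys a complete proof that visibly uses $n\ge 2$ and does not lean on the rather terse type~I portion of the paper's Proposition~\ref{indecomposable}; if anything, your treatment fills a gap in the paper's exposition rather than the other way around.
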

\begin{proof}
We know by Proposition 1.7 that  the edge polytopes of (i) and (iii) are type II decomposable.
Also, (iv) is indecomposable as we shown in Proposition 1.8. 
We have to show the rest.
First we show (i) is type I decomposable. In the complete multipartite graphs every pair of vertex disjoint edges are cycle-compatible.
We set the weight $1$ for 2 vertices of the chosen edge $e$ which we attached the 4-cycle and set the weight $-1$ for all the other vertices.
Then this sign arrangement implies type I decomposability since $e$ and all the other edges that has no common vertex with $e$ are cycle-compatible. 

Next we show (ii) is type I decomposable. Let $G = K_d$ with vertex set $[d]$. We can set the weight $1$ to vertex $1$ and $2$ and $-1$ to the others. Since all distinct pair of edges are cycle-compatible, complete graphs are type I decomposable as required.
(ii) is not type II decomposable. Let $G = K_d$ with vertex set $[d]$. Suppose that $\P_G$ is decomposable. then there exist at least one pair of edges with positive and negative sign, say $e = (i,j)$ and $f = (k,l)$. By symmetry of complete graphs we may assume that weights of $i,j,k$ and $l$ are $1,0,-1$ and $0$ We can not make this sign arrangement because a positive edge $(i,j)$ and a negative edge $(k,l)$ share a vertex.

(iii) is not type I decomposable. We can not set the weights  for the shape of $T(2)$ without breaking cycle-compatability of positive and negative edges, as we see in the proof of Proposition~\ref{indecomposable}.

\end{proof}

\section{Description of decomposable edge polytopes}
In this section, we discuss about decomposable edge polytopes in terms of underlying graphs.
Let $V'$ be the subset of $V$, $G[V']$ the induced subgraph of $G$ with vertex set $V'$ and $N(V')$ be the neighbour set of $V'$.
We call a family of vertex set $\{V_1,\ldots, V_n\}$, $V_i \in V$ is a {\em vertex partition} of $V$ if it satisfies
$\bigcup �V_i = V$ and $V_i \cap V_j = \emptyset$ for any $i \neq j$.
We call a graph $G$ is {\em empty} if there exist no edge in $G$
\begin{Lemma}\label{0condition}
$G$ is bipartite if and only if there exist a hyperplane $\H = \sum_i a_i x_i$ with $|a_1|= \ldots = |a_d| =1 $ 
such that all signatures of edges of  $G$ are $0$.
\end{Lemma}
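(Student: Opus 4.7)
The plan is to recognize that this lemma is essentially a restatement of the standard fact that bipartite graphs are exactly the graphs admitting a proper 2-coloring. Once the condition ``signature of $(i,j)$ is $0$'' is read as $a_i + a_j = 0$ (which, under the constraint $|a_i| = 1$, forces $\{a_i, a_j\} = \{1,-1\}$), the coefficient vector $(a_1,\ldots,a_d) \in \{\pm 1\}^d$ is simply a proper 2-coloring recorded with colors $\pm 1$ instead of the usual $\{0,1\}$.

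For the forward direction, assume $G$ is bipartite and let $V = A \sqcup B$ be a bipartition. Define $a_i = 1$ if $i \in A$ and $a_i = -1$ if $i \in B$, and let $\H = \{x : \sum_i a_i x_i = 0\}$. Every edge of $G$ has one endpoint in $A$ and one in $B$, so for every edge $(i,j) \in E(G)$ we get $a_i + a_j = 1 + (-1) = 0$, and $|a_i| = 1$ for all $i$ by construction. Hence the signature of each edge is zero in the required sense.

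For the converse, suppose such an $\H$ with $|a_1| = \cdots = |a_d| = 1$ exists and every edge has signature $0$, i.e., $a_i + a_j = 0$ for all $(i,j) \in E(G)$. Since $a_i, a_j \in \{\pm 1\}$, the equation forces $a_i = -a_j$. Set $A = \{i \in V : a_i = 1\}$ and $B = \{i \in V : a_i = -1\}$. Then $\{A, B\}$ is a partition of $V$, and by the displayed condition every edge of $G$ joins a vertex of $A$ to a vertex of $B$. Thus $G$ is bipartite.

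I do not expect any real obstacle here; the content is purely definitional once the $\pm 1$ weight vector is identified with a 2-coloring. The only small point to flag in the write-up is clarifying the intended reading of ``signature $0$'', since the paper defines signature as the set $\{a_i, a_j\}$ but uses ``$0$'' for the additive condition $a_i + a_j = 0$; under the hypothesis $|a_i| = 1$ these coincide with the assertion that the two endpoint weights are opposite.
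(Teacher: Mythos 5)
Your proof is correct. The forward direction is identical to the paper's: assign weight $1$ to one side of the bipartition and $-1$ to the other, so every edge has $a_i+a_j=0$. For the converse the two arguments diverge slightly. You argue directly: from $a_i+a_j=0$ and $a_i,a_j\in\{\pm 1\}$ you read off the bipartition $A=\{i:a_i=1\}$, $B=\{i:a_i=-1\}$ and observe every edge crosses it. The paper instead argues by contradiction via the odd-cycle characterization of bipartiteness: if $G$ contained an odd cycle $v_1,\dots,v_{2n+1},v_1$, then the alternation forced by $a_{v_k}=-a_{v_{k+1}}$ would give $v_1$ and $v_{2n+1}$ the same weight, making the closing edge nonzero. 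Both are one-line reformulations of ``bipartite $\Leftrightarrow$ properly $2$-colorable''; yours is marginally more self-contained since it does not invoke the odd-cycle criterion, while the paper's phrasing makes the failure mode on a non-bipartite graph explicit. Your closing remark about the terminology is also well taken: the paper defines the \emph{signature} of an edge as the set $\{a_i,a_j\}$ and the \emph{sign} as that of $a_i+a_j$, so the lemma's phrase ``all signatures are $0$'' should be read as ``all edges are zero edges,'' which under $|a_i|=1$ is exactly your condition $a_i=-a_j$.
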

\begin{proof}
Suppose $V_1 \cup V_2$ is the bipartition of $G$.
We set weight $1$ for every vertex in $V_1$ and $-1$ for every vertex in $V_2$.
It has proved because $G[V_1]$ and $G[V_2]$ are not empty.

Conversely suppose we have an odd cycle $\{v_1,v_2,\ldots , v_{2n+1} ,v_1 \}$ 
in $G$ where $n$ is a positive integer.
If weight of $v_1$ is $1$, then we have to set weight of $v_2$ to be $-1$ and $v_3$ to be $1$.
By continuing this operation, we get that weight of vertex $v_{2n+1}$ is $1$. This  contradicts that there is no positive edge.
Similarly, a contradiction occur when we start from weight of vertex $v_1$ with $-1$.  
\end{proof}

\begin{Proposition} \label{theorem}
Suppose that $G$ is a finite connected simple graph on $[d]$.  
\begin{itemize}
\item[(i)] Edge polytope $\P_G$ is type I decomposable if and only if there exist a vertex partition $\{V_{+},V_{-}\}$ of $G$
such that $G_+ = G[V_+]$ and $G_- = G[V_-]$ 
are not empty graph and every pair of edges 
($e \in E(G_+)$ , $ f \in E(G_-)$) is cycle-compatible.
\item[(ii)] Edge polytope $\P_G$ is type II decomposable if and only if there exist a vertex partition $\{V_1, V_2, V_3, V_4, V_5\}$ of $G$ that satisfies the follwoing conditions.
Let $E_{i,j}$ denote the set of edges between $V_i$ and $V_j$. \\
\rm{(1)} $G[V_1 \cup V_2]$ is a bipartite graph with a bipartition $V_1, V_2$.
\rm{(2)} $|E_{1,4}| \geq  1$, $|E_{2,3}| \geq 1$ and every pair of edges $e$ and $f$ $(e \in E_{1,4}$ , $f \in E_{2,3})$ is cycle-compatible.
\rm{(3)} $|E_{1,3}| =|E_{1,5}|= |E_{2,4}| =|E_{2,5}|=0$. 
\end{itemize}
\end{Proposition}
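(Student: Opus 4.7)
The plan is to translate the analytic condition (existence of a type I or type II separating hyperplane) into the combinatorial partition conditions (i) and (ii). The bridge is the reformulation noted just before Proposition~\ref{hyperplaneform}: $\H$ is a valid separating hyperplane of $\P_G$ precisely when (a) at least one positive and at least one negative edge of $G$ exist, and (b) every positive-negative pair of edges is cycle-compatible. By Propositions~\ref{hyperplaneform} and~\ref{type}, $\H$ can always be normalized to the form $\sum_i a_i x_i = 0$ with $a_i \in \{0,\pm 1\}$, with type I corresponding to all $a_i \in \{\pm 1\}$ and type II to the case where some $a_i = 0$ together with the signature restrictions in Proposition~\ref{type}.

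For (i), from a type I hyperplane I set $V_\pm := \{i : a_i = \pm 1\}$. Since every positive edge has signature $\{1,1\}$ and every negative edge has signature $\{-1,-1\}$, they are exactly the edges of $G[V_+]$ and $G[V_-]$ respectively. Nonemptiness of these induced subgraphs is condition (a), and the cycle-compatibility requirement in (i) is condition (b) restated. Conversely, a partition $\{V_+, V_-\}$ satisfying (i) yields a sign assignment $a_i = \pm 1$ for which the same identification returns (a) and (b) directly, so the induced hyperplane is a type I separating hyperplane.

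For (ii), starting from a type II hyperplane let $A_+, A_-, A_0$ denote the vertex sets of weights $1$, $-1$, $0$. The type II signature restrictions immediately forbid any edge inside $A_+$ or inside $A_-$, which is exactly condition (1) for $V_1 := A_+$ and $V_2 := A_-$. Next, a weight-$0$ vertex simultaneously adjacent to some vertex of $A_+$ and some vertex of $A_-$ would yield a positive and a negative edge sharing that vertex, contradicting (b) since cycle-compatibility requires disjoint endpoints. Hence $A_0$ splits as a disjoint union of vertices adjacent only to $A_-$ (call this $V_3$), only to $A_+$ (call this $V_4$), and to neither (call this $V_5$). Positive edges are then exactly $E_{1,4}$ and negative edges exactly $E_{2,3}$, so (a) and (b) give condition (2), and condition (3) is immediate from the definitions of $V_3, V_4, V_5$. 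Conversely, a 5-partition satisfying (1)--(3) induces the weight assignment $a_i = 1$ on $V_1$, $a_i = -1$ on $V_2$, $a_i = 0$ elsewhere; checking block-pair by block-pair shows every edge of $G$ has signature in $\{\{0,0\}, \{1,-1\}, \{1,0\}, \{-1,0\}\}$, with positive edges in $E_{1,4}$ and negative edges in $E_{2,3}$, so (a) holds and (b) is exactly the cycle-compatibility asserted in condition (2).

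The main obstacle lies in the forward direction of (ii): recognizing that cycle-compatibility of positive-negative pairs, together with the fact that cycle-compatibility requires disjoint endpoints, forces the weight-$0$ vertex set to split cleanly into the three adjacency classes $V_3, V_4, V_5$ rather than admitting a fourth class of weight-$0$ vertices touching both $A_+$ and $A_-$. Once this observation is in hand, both directions reduce to routine dictionary translations between the sign vector $(a_i)$ and the partition blocks.
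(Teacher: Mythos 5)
Your proposal is correct and takes essentially the same route as the paper's proof: both translate the sign vector $(a_i)$ into the partition blocks, with the key step in the forward direction of (ii) being the observation that no weight-$0$ vertex can be adjacent to both a weight-$1$ and a weight-$(-1)$ vertex (the paper phrases this as $N(V_+)\cap N(V_-)=\emptyset$), which is what makes the split of $V_0$ into $V_3$, $V_4$, $V_5$ well defined. If anything, your write-up spells out this disjointness argument slightly more explicitly than the paper does.
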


\begin{proof}
(i)(if part) Suppose that $G$ is type I decomposable with vertex partition $V_1, V_2$. We set weights of vertices in vertex set $V_1$ to be $1$ and $V_2$ to be $-1$. Then it follows $\P_G$ is type I decomposable.
(only if part) Suppose that $G$ is a decomposable graph with a type I decomposition.
Fix a sign arrangement on $G$ that gives type I decomposition.
By Proposition~\ref{type} we may say that every weight is $1$ or $-1$.
Let $V_+$(resp. $V_-$) denote the set of  $1$-weighted (resp. ($-1$)-weighted) vertices.
Since $G$ is type I decomposable, we must have at least one positive edge in $G[V_+]$ and at least one negative edge in $G[V_-]$.
We know that every pair of positive and negative edges are cycle compatible. Thus $G$ satisfies (1).

(ii) (if part) Suppose we have such a vertex partition. We set weights of vertices in $V_1$ to be $1$, $V_2$ to be $-1$, $V_3, V_4, V_5$ to be $0$. Then it is obvious that $G$ is type II.
(only if part) Suppose that $G$ is type II decomposable. Fix a sign arrangement that gives type II decomposition. Let $V_+, V_-, V_0$ be the set of, namely $1, -1, 0$-weighted vertices.
Let $V_1 = V_+$, $V_2 = V_-$.
Since this sign arrangement gives type II decomposition, we do not have any edge with weight $(1,1)$ or $(-1,-1)$. Therefore, condition(1) is satisfied. We set $V_4= N(V_+) \cap V_0$ and $V_3 = V (N_-) \cap V_0$ and $V_5 = V \setminus (V_1\cup V_2\cup V_3 \cup V_4)$.
Note that $N(V_+) \cap N(V_-) = \emptyset$. Thus, condition (3) is satisfied. 
Also condition(2) is satisfied because every pair of positive and negative edges are cycle-compatible.
\end{proof}
We say a subgraph $G'$ of $G$ is a {\em spanning} subgraph of $G$ if vertex set of $G'$ is $[d]$ with no isolated vertex in $G'$.

Recall that any integral subpolytope of $\P_G$ is again an edge polytope.
For separating hyperplane $\H$ of $\P_G$, 
let $G_0$ be the graph which satisfies $\P_{G_0} = \P_G \cap \H$.
Then the edge set of $G_0$ is the set of zero edges of $G$ defined by separationg hyperplane $\H$.
Remark that $G_0$ is not necessarily a connected graph.
\begin{Proposition}\label{Nantheorem}
Let $G$ be a connected bipartite graph. $\P_G$ is type I decomposable if and only if $\P_G$ is type II decomposable.
\end{Proposition}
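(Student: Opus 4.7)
The plan is to apply the graph-theoretic characterizations in Proposition~\ref{theorem}. Write $V(G) = A \sqcup B$ for the bipartition of the connected bipartite graph $G$.

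For the ``only if'' direction (type I $\Rightarrow$ type II), I would take the type I partition $(V_+, V_-)$ from Proposition~\ref{theorem}(i) and define weights $a_i = 1$ on $V_+ \cap A$, $a_i = -1$ on $V_- \cap B$, and $a_i = 0$ elsewhere. A short case analysis on the four possible $A\times B$-blocks shows that the positive edges are exactly $E(G[V_+])$, the negative edges are exactly $E(G[V_-])$, and cycle-compatibility of every positive-negative pair follows verbatim from the type I hypothesis. Reading off the five blocks gives the type II partition demanded by Proposition~\ref{theorem}(ii).

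For the converse, the key structural fact is that in a bipartite graph two vertex-disjoint edges are cycle-compatible if and only if their four endpoints span a $K_{2,2}$ with two vertices on each side of the bipartition. I would fix a cycle-compatible pair $(e,f) \in E_{1,4} \times E_{2,3}$ coming from a given type II partition $(V_1,\ldots,V_5)$ and examine where its four endpoints sit in $A \sqcup B$: the prohibitions $E_{1,3} = E_{2,4} = 0$ eliminate two of the four possible side-configurations, leaving two that are mirror images. Taking one cycle-compatible pair of each type and combining them crosswise would force an edge in $E_{1,3}$, so all cycle-compatible pairs lie in a single configuration; by symmetry I may assume $V_1 \cap A$, $V_4 \cap B$, $V_2 \cap B$, $V_3 \cap A$ are nonempty and every $E_{1,4}$-edge runs from $V_1 \cap A$ to $V_4 \cap B$.

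I would then show $V_1 \cap B = V_2 \cap A = \emptyset$. A vertex $u' \in V_1 \cap B$ cannot have neighbors in $V_1 \cap A$ (independence of $V_1$), nor in $V_3 \cap A$ or $V_5 \cap A$ (conditions $E_{1,3} = E_{1,5} = 0$), nor in $V_4 \cap A$ (by the single-configuration conclusion), leaving only $V_2 \cap A$; symmetrically $V_2 \cap A$ sees only $V_1 \cap B$. Thus $V_1 \cap B \cup V_2 \cap A$ is a union of connected components of $G$ disjoint from the nonempty $V_1 \cap A$, contradicting connectedness unless both sets are empty. Then $V_1 \subseteq A$ and $V_2 \subseteq B$, which together with $V_3 \subseteq N(V_2)$, $V_4 \subseteq N(V_1)$ and bipartiteness force $V_3 \subseteq A$ and $V_4 \subseteq B$. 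Setting $V_+ := V_1 \cup V_4 \cup (V_5 \cap B)$ and $V_- := V_2 \cup V_3 \cup (V_5 \cap A)$, the conditions $E_{1,5} = E_{2,5} = 0$ together with bipartiteness make the edges of $G[V_+]$ exactly $E_{1,4}$ and the edges of $G[V_-]$ exactly $E_{2,3}$; cycle-compatibility is inherited from the type II hypothesis, verifying Proposition~\ref{theorem}(i).

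The main obstacle is the combination of the single-configuration collapse with the connectivity argument: the side-configuration enumeration is mechanical but requires careful bookkeeping, and without using that $G$ is connected one cannot rule out a parasitic $V_1 \cap B$, which would yield edges in $G[V_+]$ failing cycle-compatibility with edges in $G[V_-]$ in the naive type I construction.
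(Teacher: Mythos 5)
Your forward direction (type I $\Rightarrow$ type II) is essentially the paper's own argument. The paper takes a type I sign arrangement, changes the positive vertices on one side of the bipartition and the negative vertices on the other side to zero, and observes that the sets of positive and negative edges are unchanged, so cycle-compatibility is inherited; your assignment ($1$ on $V_+\cap A$, $-1$ on $V_-\cap B$, $0$ elsewhere) is the mirror image of exactly that construction. For the converse (type II $\Rightarrow$ type I) the paper gives no argument at all --- it simply cites \cite{HLZ} --- so your explicit derivation is a genuinely different route that makes the equivalence self-contained. Your two key observations there are correct: in a bipartite graph a cycle-compatible pair spans a $4$-cycle alternating between the sides, and the prohibitions $|E_{1,3}|=|E_{2,4}|=0$ together with condition (2) of Proposition~\ref{theorem}(ii) force a single side-configuration on all of $E_{1,4}\times E_{2,3}$; combined with connectedness this kills $V_1\cap B$ and $V_2\cap A$.

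There is, however, one unjustified step in the converse. You invoke $V_3\subseteq N(V_2)$ and $V_4\subseteq N(V_1)$ to conclude $V_3\subseteq A$ and $V_4\subseteq B$, but neither inclusion appears among conditions (1)--(3) of Proposition~\ref{theorem}(ii): an arbitrary admissible partition may contain a vertex of $V_4$ with no neighbour in $V_1$ at all, sitting on the wrong side of the bipartition. Without $V_4\subseteq B$ your set $V_+=V_1\cup V_4\cup(V_5\cap B)$ can acquire edges inside $V_4$, or between $V_4\cap A$ and $V_5\cap B$, and nothing guarantees these are cycle-compatible with the edges of $G[V_-]$, so the claim $E(G[V_+])=E_{1,4}$ fails as stated. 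The repair is easy and is the same normalization the paper uses in the proof of Theorem~\ref{charactorization}(b): every vertex of $V_4\cap A$ has no neighbour in $V_1$ (its neighbours lie in $B$ and $V_1\cap B=\emptyset$) and none in $V_2$ (since $|E_{2,4}|=0$), so it may be moved into $V_5$ without disturbing (1)--(3), and symmetrically for $V_3\cap B$; alternatively, simply take $V_+=V_1\cup(B\setminus V_2)$ and $V_-=V_2\cup(A\setminus V_1)$, for which $E(G[V_+])=E_{1,4}$ and $E(G[V_-])=E_{2,3}$ follow directly from bipartiteness, the single-configuration conclusion, and condition (3). With that adjustment your argument goes through.
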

\begin{proof}
``if" part is given in \cite{HLZ}.
Now we have to show ``only if" part. Suppose that $\P_G$ is type I decomposable.
Let $V_1,V_2$ be a bipartition of $G$, put the vertices of $V_1$ on the left and $V_2$ on the right. Since $\P_G$ is type I decomposable,
 let $H$ be a sign arrangement on $G$ that is type I. Then we can see that on both of the left side and the right side, there are
 both positive and negative signs. Now apply the following change to the sign arrangement $H$: change all the
 possitive vertices on the left to zeros, and change all the negative vertices on the right to zeros. We call this new sign arrangement
 $H'$. We claim that with this $H'$ makes $G$ type II decomposable. 
 First, it is clear that in $H'$, all the edges have the form $(0,1)$, $(0,-1)$ or $(0,0)$. Then it is left to show that any pair of positive edge and negative edge in $H'$ satisfies the cycle compatability. This is true because an edge in $H'$ is positive (or negative)
 if and only if that edge in $H$ is positive (or negative). Since any pair
 of positive edge and negative edge in $H$ satisfies cycle-compatability, so does in $H'$.
\end{proof}
\begin{Example}
For the following bipartite graph, we can give a type I sign arrangement as Figure~\ref{bipartite} (a).
We have two type II sign arrangement obtained by Proposition~\ref{Nantheorem}. Those are Figure~\ref{bipartite} (b) and (c).
\begin{figure}[htbp]
\begin{tabular}{c}
\begin{minipage}{0.33\hsize}
\begin{center}
\includegraphics[width = 4.5cm]{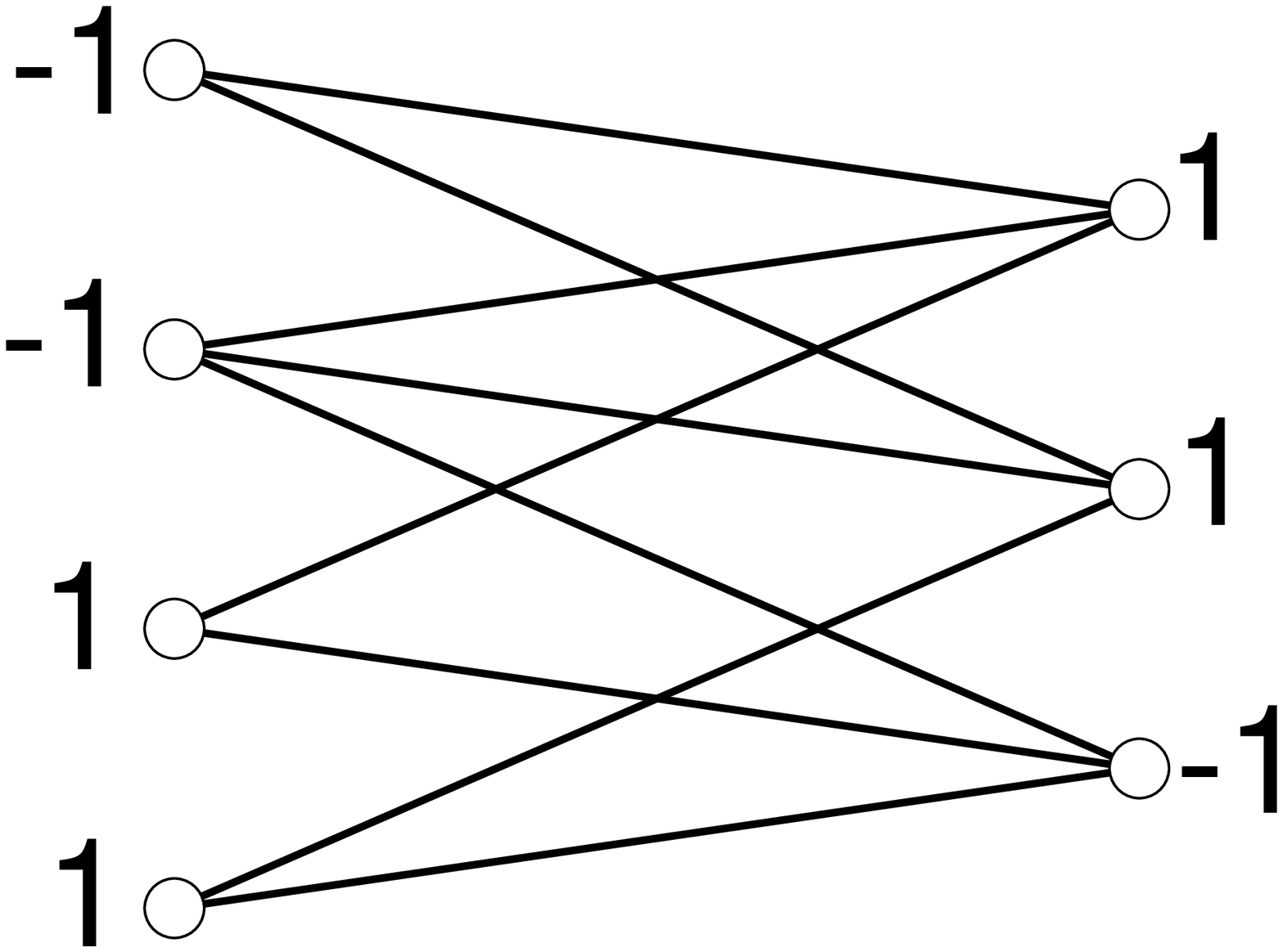}
\hspace{1.6cm} (a)
\end{center}
\end{minipage}

\begin{minipage}{0.33\hsize}
\begin{center}
\includegraphics[width = 4.5cm]{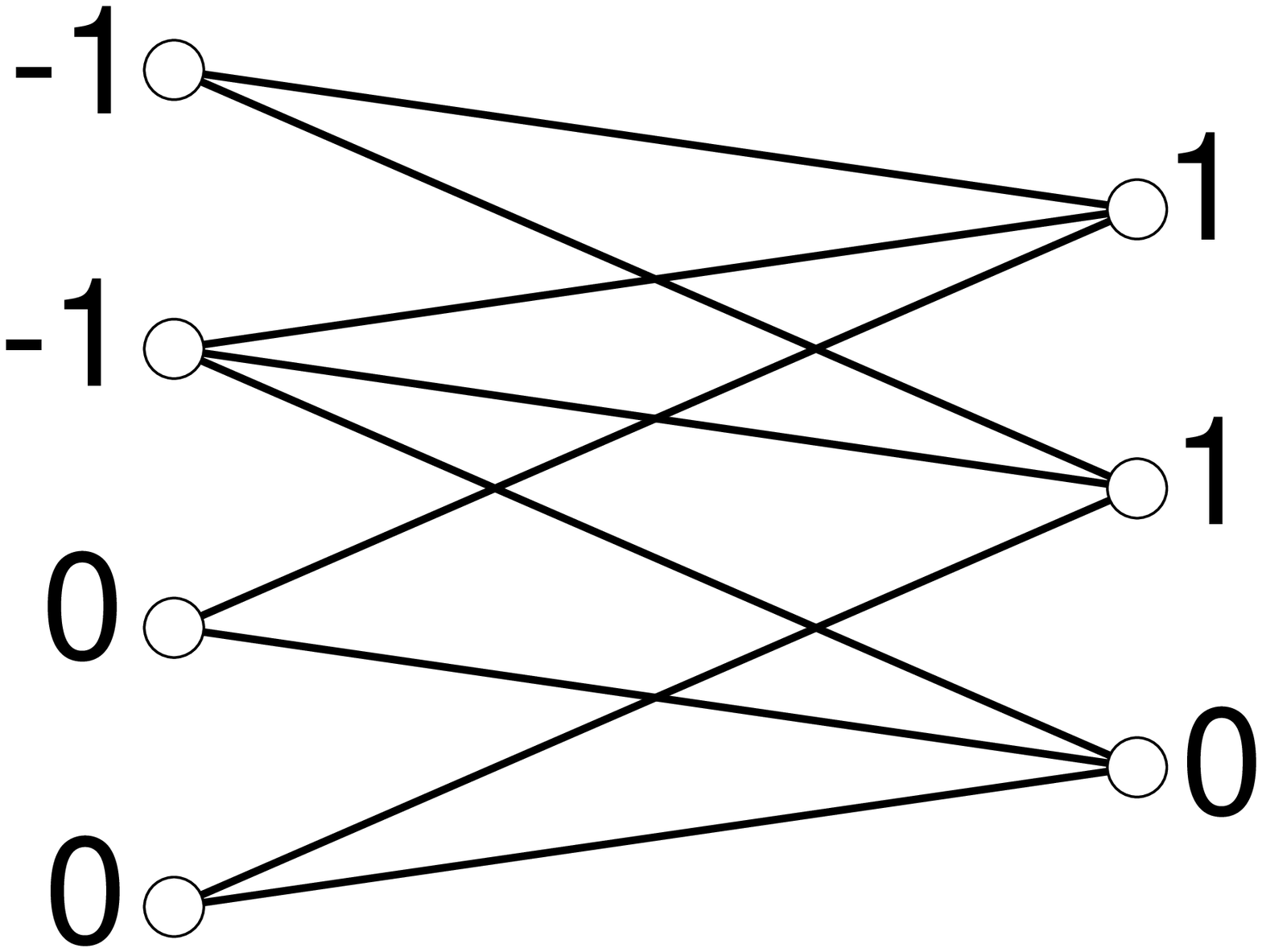}
\hspace{1.6cm} (b)
\end{center}
\end{minipage}

\begin{minipage}{0.33\hsize}
\begin{center}
\includegraphics[width = 4.5cm]{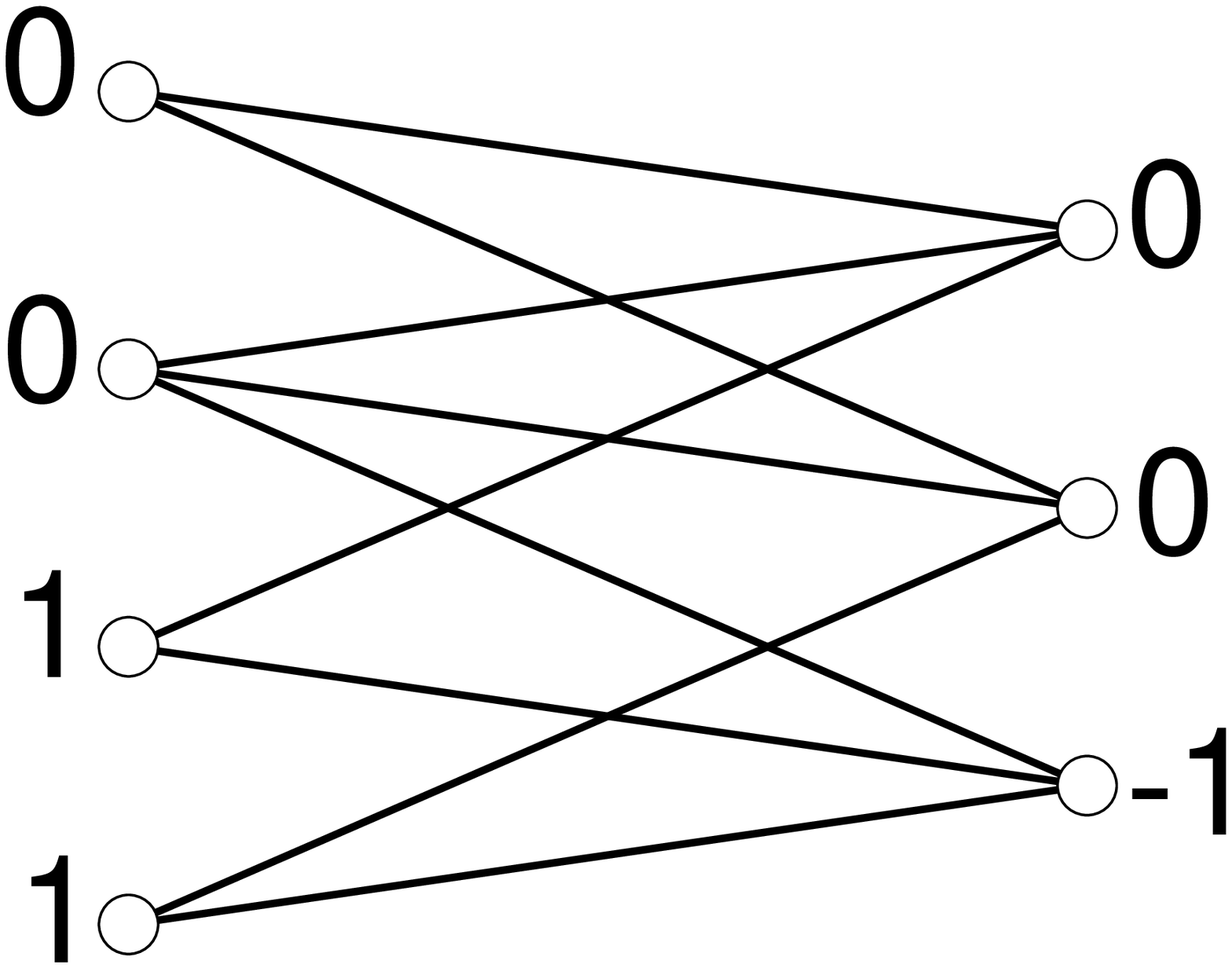}
\end{center}
\hspace{1.6cm} (c)
\end{minipage}
\end{tabular}
\caption{}\label{bipartite}
\end{figure}
\end{Example}
\newpage
As the corollary of Proposition~\ref{theorem} and
Proposition~\ref{Nantheorem}, we can describe decomposability of edge polytopes as the followings. 
\begin{Theorem}\label{charactorization}
Suppose that $G$ is a connected simple graph.
\begin{itemize}
\item[(a)] If $G$ has at least one odd cycle and is type I decomposable, then $G_0$ is a connected bipartite graph.
\item[(b)] If $G$ has at least one odd cycle and is type II decomposable, then $G_0$ consists of exactly $2$ connected components. One is bipartite and the other is not bipartite.
\item[(c)] If $G$ is bipartite and type I decomposable, then $G_0$ is consists of exactly $2$ connected components.
\end{itemize}
\end{Theorem}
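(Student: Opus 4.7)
The plan is to combine Proposition~\ref{dim} with the fact that a separating hyperplane $\H$ meets the interior of $\P_G$, so $\dim(\P_G \cap \H) = \dim\P_G - 1$. Since $\P_G \cap \H = \P_{G_0}$, applying Proposition~\ref{dim} to $G_0$ (taking its vertex set to be $[d]$, so that isolated vertices count as trivially bipartite components) reads
\[
\dim\P_{G_0} \;=\; d - r(G_0) - 1,
\]
where $r(G_0)$ is the number of bipartite connected components of $G_0$. This formula is the engine of the argument, pinning down $r(G_0)$ from $\dim\P_G$.

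For part (a), $G$ is connected and non-bipartite so $\dim\P_G = d-1$, hence $r(G_0)=1$. In a type I arrangement all weights are $\pm 1$, so every zero edge has signature $\{+1,-1\}$, making $G_0$ bipartite; consequently every component of $G_0$ is bipartite and $r(G_0)$ equals the total number of components (isolated vertices included). So $G_0$ must be connected bipartite. Part (c) is entirely analogous: $G$ bipartite and connected gives $\dim\P_G = d-2$, whence $r(G_0)=2$, and since $G_0\subseteq G$ is bipartite the total number of components of $G_0$ is exactly $2$.

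For part (b) the dimension count only gives $r(G_0)=1$, and this allows in principle any number of non-bipartite components. The structural claim must then be extracted from Proposition~\ref{theorem}(ii): in a type II arrangement $V = V_1 \cup V_2 \cup V_3 \cup V_4 \cup V_5$ with $V_0 := V_3 \cup V_4 \cup V_5$, the zero edges are exactly those inside $V_0$ and those between $V_1$ and $V_2$, and no zero edge joins $V_0$ to $V_1 \cup V_2$. Thus every component of $G_0$ lies entirely inside $V_0$ or entirely inside $V_1\cup V_2$. The core step is to show that each side contributes exactly one component. Given a positive edge $(u,v)$ with $v\in V_4$ and a negative edge $(w,x)$ with $x\in V_3$, condition~(3) of Proposition~\ref{theorem} forbids the edges $(u,x)\in E_{1,3}$ and $(v,w)\in E_{2,4}$, so cycle-compatibility of the pair forces $(u,w)\in E_{1,2}$ and $(v,x)\in E_{3,4}$. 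Letting these pairs vary shows that all of $V_3\cup V_4$ lies in one $G_0$-component inside $V_0$; together with the observation that every $V_5$-vertex must link to $V_3\cup V_4$ through $V_0$ (otherwise $G$ would be disconnected, since $V_5$ has no $V_1\cup V_2$ edges by condition (3)), this gives one $G_0$-component in $V_0$. Similarly all $V_1$-vertices incident to some positive edge and all $V_2$-vertices incident to some negative edge lie in one $G_0$-component by cycle-compatibility, and a putative $G_0$-component of $V_1\cup V_2$-vertices not incident to any non-zero edge would be closed under $G$-neighbors, hence a connected component of $G$, contradicting the connectedness of $G$ together with $V_0\neq\emptyset$. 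This yields exactly two components of $G_0$; the one in $V_1\cup V_2$ is bipartite, and $r(G_0)=1$ then forces the $V_0$-component to be non-bipartite. The main obstacle is precisely this structural step: dimension alone cannot bound the number of non-bipartite components, and one must combine cycle-compatibility, the forbidden edge types of condition~(3), and the connectedness of $G$ to collapse everything to the advertised two components.
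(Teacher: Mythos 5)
Your proof is correct and follows essentially the same route as the paper: the dimension formula of Proposition~\ref{dim} settles (a) and (c), and for (b) you extract the two-component structure from the partition of Proposition~\ref{theorem}(ii) together with cycle-compatibility (forcing the zero edges $E_{3,4}$ and $E_{1,2}$) and connectedness of $G$, exactly as the paper does. The only cosmetic difference is that you prove connectedness of the $V_1\cup V_2$ component combinatorially and deduce non-bipartiteness of the $V_0$ component from $r(G_0)=1$, whereas the paper obtains both directly from the dimension count.
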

\begin{proof}
In this proof we repeatedly use Proposition~\ref{dim} that claims dimension of edge polytope is reduced by the number of connected bipartite components.
 \begin{itemize}
\item[(a)] Let $G$ be a connected non-bipartite graph. Then ${\rm dim}(\P_G) = d - 1 $ because only one connected component is not bipartite.
It is known from Lemma~\ref{0condition} that $G_0$ is bipartite.  If $G_0$ is not connected, then we have at least $2$ connected bipartite components. On the other hand, we know that ${\rm dim} (\P_G) = d-1$ and hence $ {\rm dim} (\P_{G_0})= {\rm dim}(\H) = d-2 $. Thus, $G_0$ is connected.

\item[(b)] Since $G$ is type II decomposable, Let $V_1,\ldots V_5$ be a vertex  partition of $G$ given by Proposition~\ref{theorem}.
As we see in the proof of Proposition~\ref{theorem}, we obtain $ G_0 = G[V_1\cup V_2] \cup G[V_3 \cup V_4 \cup V_5]$.
Now we show that $G[V_1\cup V_2]$ is connected bipartite and $G[V_3\cup V_4\cup V_5]$ is connected non-bipartite.
$G[V_1 \cup V_2]$ is bipartite because $G[V_1]$ and $G[V_2]$ is empty graphs.
$G[V_1 \cup V_2]$ is connected, otherwise ${\rm dim}(\H) \le {\rm dim}(\P_G)-2$, it contradicts $\H$ is a separating hyperplane.
Suppose $G[V_3 \cup V_4\cup V_5]$ is bipartite. Then we have at least 2 bipartite connected components, again a contradiction. That implies $G[V_3 \cup V_4\cup V_5]$ is not bipartite.
We may assume that for any vertex $i \in V_3$ (or $V_4$), there exist a vertex $j \in V_2$ (or $V_1$) such that $\{i,j\}$ is an edge in $G$ and for any vertex. (If we have vertex which does not have neighbors in $V_2$(or $V_1$), we may move the vertex to $V_5$. Then new vertex partition again satisfies our condition.)
Since vertices in $V_3$ and $V_4$ are end points of positive or negative edges, 
$G[V_3\cup V_4]$ form a complete bipartite graph. Thus, $G[V_3 \cup V_4]$ is connected.
Our edge delesion from $G$ to $G_0$ is exactly the delesion of $E_{1,4}$ and $E_{2,3}$.
$G$ is connected and vertices in $V_5$ are not connected to $V_1$ and $V_2$. That implies for any vertex $v\in V_5$, there exist a vertex $u\in V_3 \cup V_4$ and a finite path from $v$ to $u$ in $G[V_3\cup V_4\cup V_5]$. Therefore, $G[V_3\cup V_4\cup V_5]$ is connected.

\item[(c)] Since $G$ is connected bipartite, we know ${\rm dim} (\P_G) = d-2 $ and hence ${\rm dim} (\H) =  d-3 $. Since $G_0$ is a subgraph of $G$, all the connected components are bipartite. Then $G_0$ consists of $2$ connected bipartite components.
\end{itemize}
\end{proof}

We are grateful to Prof. Alexander Engstr\"om and Prof. Patrik Nor\'en for their useful suggenstions on this research.

{}
\end{document}